\documentclass[12pt]{amsart}
\usepackage{amssymb,latexsym,amsthm,mathptmx}
\usepackage{amsmath,amsfonts,amssymb}
\usepackage{graphicx}
\usepackage{color}
\usepackage{mathrsfs}

\newcommand{\ch}{\operatorname{Ch}}

\newcommand{\bbT}{\mathbb{T}}



\newcommand{\mxa}{M_{x,\alpha}}
\newcommand{\fxa}{F_{x,\frac{\alpha}{|\alpha|}}}
\newcommand{\fxma}{F_{x,\frac{-\alpha}{|\alpha|}}}
\newcommand{\Tfxa}{F_{\phi(x),\frac{\alpha}{|\alpha|}\tau(x,1)}}
\newcommand{\Tfxba}{F_{\phi(x),\frac{\overline{\alpha}}{|\alpha|}\tau(x,1)}}
\newcommand{\Tfxma}{F_{\phi(x),\frac{-\alpha}{|\alpha|}\tau(x,1)}}
\newcommand{\Tfxmba}{F_{\phi(x),\frac{-\overline{\alpha}}{|\alpha|}\tau(x,1)}}

\newtheorem{theorem}{Theorem}[section]
\newtheorem{lemma}[theorem]{Lemma}

\theoremstyle{definition}
\newtheorem{definition}[theorem]{Definition}

\theoremstyle{remark}

\begin{document}
\author{
Osamu~Hatori
}
\address{
Institute of Science and Technology,
Niigata University, Niigata 950-2181, Japan
}
\email{hatori@math.sc.niigata-u.ac.jp
}

\author{
Shiho Oi
}
\address{
Department of Mathematics, Faculty of Science, 
Niigata University, Niigata 950-2181, Japan.
}
\email{shiho-oi@math.sc.niigata-u.ac.jp
}

\author{
Rumi Shindo Togashi}
\address{
National Institute of Technolory, Nagaoka College
888 Nishikatakai, Nagaoka, Niigata 940-8532, Japan
}
\email{rumi@nagaoka-ct.ac.jp
}


\title[]
{Tingley's problems on uniform algebras}

\keywords{Tingley's problem, surjective isometries, uniform algebras, maximal convex sets, analytic functions}

\subjclass[2020]{46B04, 46B20, 46J10, 46J15
}


\begin{abstract}
We prove that a surjective isometry between the unit spheres of two uniform algebras is extended to a surjective real-linear isometry between the uniform algebras. It provides the first positive solution to Tingley's problem on a Banach space,  without being a Hilbert space,  consisting of analytic functions.
\end{abstract}
\maketitle
\section{Introduction}\label{sec1}
Tingley's problem asks the extensibility of a surjective isometry between the unit spheres of two Banach spaces. 
In 1987, Tingley \cite{tingley} proposed the problem. Since then
a lot of work has been done trying to solve this. Despite its simple statement, Tingley's problem is a hard problem which remains unsolved for several types of Banach spaces.
No counterexample is known. Due to \cite[p.730]{yangzhao} Ding was the first to consider Tingley's problem between different type of spaces \cite{ding2003B}.
A Banach space $E$ satisfies the Mazur-Ulam property if  a surjective isometry  between the unit spheres of $E$ and any Banach space is extended to a surjective isometry between the whole spaces (cf. \cite{chengdong}).

Wang \cite{wang} seems to be the first to solve Tingley's problem between specific spaces. He dealt with $C_0(\Omega)$, the space of all complex-valued continuous functions on a locally compact Hausdorff space $\Omega$ which vanish at infinity. 
Although we do not mention each of the literatures, a considerable number of interesting  results have shown that Tingley's problem has a positive solution on $\ell^p$ spaces, $L^p$ spaces, $C_0(\Omega)$ spaces, certain finite dimensional Banach spaces, and so on. 

Results on the classical Banach spaces satisfying the Mazur-Ulam property are a little bit reduced. Including the space of all real null sequences, the space of all bounded real-valued functions on a discrete set, the space of all real-valued continuous functions on a compact Hausdorff space, the space of all  $p$-integrable or  essentially bounded real-valued functions on a $\sigma$-finite measure space for $1\le p<\infty$ are studied.
In \cite{thl2013} 
Tan, Huang and Liu introduced the notion of generalized lush spaces and local GL spaces and proved that every local GL space satisfies the Mazur-Ulam property.
The main result in \cite{jmpr2019} probably provides the first example of a complex Banach space which is not a Hilbert space  satisfying the Mazur-Ulam property (cf. \cite{peralta2019a}).

Besides the large list of classical Banach spaces giving positive solutions,
a series of papers by Tanaka \cite{tanaka,tanaka2017,tanaka2017b} on the algebra of complex matrices, a finite dimensional $C^*$-algebra and a finite von Neumann algebra opened up another direction of study on Tingley's problem. 
After 
	\cite{cp2019,fgpv2017,fjp2020,fp2018a,fp2018b,fp2017,lnw2020,mori,pt2019}
new achievements by Mori and Ozawa \cite{moriozawa} prove that the Mazur-Ulam  property is satisfied by unital $C^*$-algebras and real von Neumann algebras.
The result proving that all general JBW*-triples satisfy the Mazur-Ulam Property is established by 
Becerra-Guerrero, Cueto-Avellaneda, Fern\'andez-Polo and Peralta \cite{bcfp}  and 
Kalenda and Peralta \cite{kp}. 
Fern\'andez-Polo and Peralta \cite{fp2018c}  prove that Tingley's problem admits a positive solution for surjective isometries between the unit spheres of two 
weakly compact JB*-triples. The Mazur-Ulam property for all weakly compact JB*-triples is established by a recent paper by Peralta \cite{peraltaJB*}.

Nagy proved  a variant of Tingley's problem in \cite[Theorem]{nagy2018} and conjectured that the similar conclusion holds for every complex Hilbert space. Peralta \cite[Theorem 6.10]{peralta2018} completed a proof of Nagy's conjecture and proposed a more general problem (cf. \cite{peralta2019b}). Combining with a problem posed by
Mori  for noncommutative $L^p$-spaces \cite[Problem 6.3]{mori} Leung, Ng and Wong \cite[Problem 1]{lnw2021} extended a problem of Peralta for ordered Banach spaces (see also \cite{lnw2018}). They provide results on a surjective isometry between the positive unit sphere of a function space $L^p(\mu)$ or $C(X)$ \cite{lnw2021}.

The reader is referred to the surveys \cite{ding2009,peralta2018,yangzhao} and Introduction  of \cite{cueto} for details about Tingley's problem.

In this paper we add a positive solution to Tingley's problem. We prove that a surjective isometry between the spheres of two uniform algebras is extended to a surjective real-linear isometry between the whole uniform algebras. 
Let $X$ be a compact Hausdorff space and $C(X)$ the Banach algebra of all complex-valued continuous functions on $X$. 
A closed subalgebra of $C(X)$ which contains constants and separates the points of $X$ is called a uniform algebra on $X$. 
 A uniform algebra is called a function algebra in \cite{br}, which is the name of other object in some literatures recently. To avoid a confusion, we use the terminology ``a uniform algebra''. 
 Typical examples of  a uniform algebra  consist of analytic functions of one and several complex-variables such as the disk algebra, the polydisk algebra and the ball algebra. The algebra of all bounded analytic functions $H^\infty(D)$ on a certain domain $D$ is considered as a uniform algebra on the maximal ideal space. In fact, Theorem \ref{main} provides the first positive solution to Tingley's problem on a Banach space of analytic functions.
 
 For further information about uniform algebras, see \cite{br}. 
\section{Surjective isometries between the unit spheres of uniform algebras}\label{sec2}
 Throughout the paper, $A$ and $B$ are uniform algebras on compact Hausdorff spaces $X$ and $Y$, respectively.  The norm of a uniform algebra is denoted by $\|\cdot\|$. 
We denote the unit sphere of $A$ (resp. $B$) by $S(A)$ (resp. $S(B)$). The map $T:S(A)\to S(B)$ is always a surjective isometry in this paper.
We do not apply the notation of the Gelfand transform. The Gelfand transform of $f\in A $ is also denoted by $f$. We may assume $f\in A$ is also defined on the maximal ideal space $M_A$ of $A$ without confusion.

Our main result is the following.
\begin{theorem}\label{main}
Let $A$ and $B$ be uniform algebras on compact Hausdorff spaces $X$ and $Y$, respectively.
Suppose that $T:S(A)\to S(B)$ is a surjective isometry. Then $T$ is extended to a real-linear surjective isometry from $A$ onto $B$. In particular, $|T(1)|=1$ on $M_B$, and there exists a homeomorphism $\Psi:M_B\to M_A$ and possibly empty disjoint closed and open subsets $M_{B+}$ and $M_{B-}$ of $M_B$ with $M_{B+}\cup M_{B-}=M_B$ such that a surjective real-linear isometry $\widetilde{T}:A\to B$ defined by 
\begin{equation*}
\widetilde{T}(f)=T(1)\times 
\begin{cases}
\text{$f\circ\Psi$ \quad on $M_{B+}$} \\
\text{$\overline{f\circ \Psi}$ \quad  on $M_{B-}$}
\end{cases}
\end{equation*}
for every $f\in A$ extends $T$; i.e. $\widetilde{T}|S(A)=T$.
\end{theorem}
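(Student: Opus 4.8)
\emph{Maximal convex subsets.} The plan is to recover $T$ from its action on the maximal convex subsets of $S(A)$ and $S(B)$. For $x\in\ch(A)$ and $\lambda\in\mathbb{T}$ put $F_{x,\lambda}=\{f\in S(A):f(x)=\lambda\}$. I would first show that the maximal convex subsets of $S(A)$ are exactly these sets: a maximal convex subset is a proper closed convex subset of the unit ball lying on the sphere, hence is supported by a norm-one functional and equals $\{f\in S(A):\Phi(f)=1\}$ for some $\Phi\in S(A^*)$; a Krein--Milman argument inside the face $\{\chi\in B_{A^*}:\chi\equiv 1\text{ there}\}$, the Arens--Kelley identification $\ext(B_{A^*})=\{\lambda\delta_x:x\in\ch(A),\ \lambda\in\mathbb{T}\}$, and the fact (via the annihilating-measure criterion, using that points of $\ch(A)$ are strong boundary points) that finite subsets of $\ch(A)$ are peak interpolation sets, together force $\Phi$ to be a unimodular multiple of a point evaluation at a point of $\ch(A)$. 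By a now standard lemma a surjective isometry between unit spheres carries maximal convex subsets bijectively onto maximal convex subsets; since $F_{x,\lambda}\cap F_{x',\lambda'}=\emptyset$ exactly when $x=x'$ and $\lambda\ne\lambda'$ (peak interpolation at points of $\ch$ handles $x\ne x'$), a relation $T$ preserves, the induced bijection respects first coordinates, giving a bijection $\varphi\colon\ch(A)\to\ch(B)$ and bijections $\tau_x$ of $\mathbb{T}$ with $T(F_{x,\lambda})=F_{\varphi(x),\tau_x(\lambda)}$. As $\operatorname{dist}(F_{x,\lambda},F_{x,\lambda'})=|\lambda-\lambda'|$ and $T$ is isometric, each $\tau_x$ is a surjective isometry of $\mathbb{T}$, so $\tau_x(\lambda)=c_x\lambda$ for all $\lambda$ or $\tau_x(\lambda)=c_x\overline{\lambda}$ for all $\lambda$, with $c_x=\tau_x(1)\in\mathbb{T}$; Tingley's relation $T(-f)=-T(f)$ is built in.

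\emph{The boundary formula (the crux).} I would then prove that for every $f\in S(A)$ and every $x\in\ch(A)$ one has $T(f)(\varphi(x))=|f(x)|\,\tau_x\!\bigl(f(x)/|f(x)|\bigr)$ (read as $0$ when $f(x)=0$), that is, $T(f)(\varphi(x))$ equals $c_xf(x)$ or $c_x\overline{f(x)}$ according to the type of $\tau_x$. Step~1 gives this only when $|f(x)|=1$, so the work is the case $|f(x)|<1$. First, using a weak peak function $p$ at $x$, one has $T(\mu p^{n})(\varphi(x))=\tau_x(\mu)$ for every $\mu\in\mathbb{T}$ while $\limsup_n\|f-\mu p^{n}\|\le|f(x)|+1$; hence $|T(f)(\varphi(x))-\zeta|\le|f(x)|+1$ for every $\zeta\in\mathbb{T}$, forcing $|T(f)(\varphi(x))|\le|f(x)|$, and applying this to $T^{-1}$ gives equality. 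Next, Bishop's peak-interpolation theorem applied to the peak interpolation set $\{x\}$ with the continuous weight $\min(1-|f(x)|,\,1-|f|)$ produces $g\in S(A)$ with $g(x)=f(x)/|f(x)|$ and $\|g-f\|=1-|f(x)|$; since $T(g)(\varphi(x))=\tau_x(g(x))$ and $|T(f)(\varphi(x))-T(g)(\varphi(x))|\le\|f-g\|=1-|f(x)|$, the value $T(f)(\varphi(x))$ lies on the circle of radius $|f(x)|$ at distance $\le 1-|f(x)|$ from the unimodular number $\tau_x(g(x))$, and the only such point is $|f(x)|\,\tau_x(g(x))$. I expect this transfer of control from the unimodular points of $f$ to all of $\ch(A)$, via peak interpolation, to be the main obstacle (and the non-metrizable case requires nets of weak peak functions rather than a single $p$).

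\emph{The real-linear extension.} Define $\widetilde{T}(f)=\|f\|\,T(f/\|f\|)$ for $f\ne0$ and $\widetilde{T}(0)=0$; this lands in $B$ (each $T(f/\|f\|)\in S(B)\subseteq B$) and restricts to $T$ on $S(A)$. Writing $u=T(1)\in S(B)$, Step~2 shows that at each $y\in\ch(B)$ one has $\widetilde{T}(f)(y)=u(y)\,f(\varphi^{-1}(y))$ or $u(y)\,\overline{f(\varphi^{-1}(y))}$, the choice depending only on $y$. Since at a fixed point conjugation-or-not is additive and real-homogeneous, $\widetilde{T}(f+g)$ and $\widetilde{T}(f)+\widetilde{T}(g)$, and $\widetilde{T}(tf)$ and $t\widetilde{T}(f)$ for $t\in\mathbb{R}$, agree on $\ch(B)$ and hence on $M_B$ (an element of $B$ is determined by its values on $\ch(B)$). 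Thus $\widetilde{T}$ is real-linear, and being real-linear with $\widetilde{T}|_{S(A)}=T$ a bijective isometry onto $S(B)$, it is a surjective real-linear isometry extending $T$.

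\emph{The explicit form.} Surjectivity of $\widetilde{T}$ yields $v\in S(A)$ with $\widetilde{T}(v)=1$; then $\widetilde{T}(v^{2})$ restricts to $u^{-1}$ on $\ch(B)$ (where $|u|=1$ by Step~1), so $u\,\widetilde{T}(v^{2})=1$ on $M_B$, whence $u$ is invertible in $B$ with $\|u\|=\|u^{-1}\|=1$ and therefore $|T(1)|=1$ on $M_B$. The element $e=\tfrac12\bigl(1-i\,u^{-1}\widetilde{T}(i)\bigr)$ satisfies $e^{2}=e$ (because $u^{-1}\widetilde{T}(i)$ equals $\pm i$ on $\ch(B)$, so its square is $-1$ on $M_B$), giving a clopen decomposition $M_B=M_{B+}\cup M_{B-}$ with $M_{B+}=e^{-1}(1)$ and $M_{B-}=e^{-1}(0)$ separating the two types on $\ch(B)$. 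Finally $\Lambda(f):=u^{-1}\widetilde{T}(f)$ on $M_{B+}$ and $\overline{u^{-1}\widetilde{T}(f)}$ on $M_{B-}$ is a unital isometric surjective algebra homomorphism from $A$ onto the uniform algebra $B|_{M_{B+}}\oplus\overline{B|_{M_{B-}}}$, hence an isometric isomorphism implemented by a homeomorphism $\Psi\colon M_B\to M_A$ with $\Lambda(f)=f\circ\Psi$; unwinding the definition of $\Lambda$ gives $\widetilde{T}(f)=T(1)\cdot(f\circ\Psi)$ on $M_{B+}$ and $\widetilde{T}(f)=T(1)\cdot\overline{f\circ\Psi}$ on $M_{B-}$, which is the asserted formula. (Alternatively, once $\widetilde{T}$ is known to be a surjective real-linear isometry, this last step may be quoted from the known description of such isometries between uniform algebras.)
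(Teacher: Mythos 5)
Your overall architecture coincides with the paper's: classify the maximal convex subsets of $S(A)$ as the sets $F_{x,\lambda}$ ($x\in \ch(A)$, $\lambda\in\bbT$) via Krein--Milman and Arens--Kelley, transfer them by the Cheng--Dong/Tanaka lemma to get $\phi$ and $\tau$, show $\tau(x,\cdot)$ is a rotation or a conjugate-rotation, prove the pointwise boundary formula $T(f)(\phi(x))=\tau(x,1)f(x)$ or $\tau(x,1)\overline{f(x)}$, extend by positive homogeneity, and read off real-linearity and the weighted-composition form on $\ch(B)$ (your isometry-of-$\bbT$ argument for $\tau(x,\cdot)$ and your idempotent $e=\tfrac12(1-iu^{-1}\widetilde{T}(i))$ are clean alternatives to the paper's arc argument and its citation of Hatori--Miura). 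Your step (a), that $|T(f)(\phi(x))|=|f(x)|$ via approximate peaking functions and surjectivity of $\tau(x,\cdot)$, is also sound and plays the role of the paper's condition $d(f,F_{x,-\alpha/|\alpha|})=1+|\alpha|$.

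The genuine gap is exactly at the step you flag as the crux. You propose to produce $g\in S(A)$ with $g(x)=f(x)/|f(x)|$ and $\|g-f\|=1-|f(x)|$ by applying Bishop's peak-interpolation theorem to $\{x\}$ with the weight $w=\min(1-|f(x)|,1-|f|)$, i.e.\ to find $h=g-f\in A$ with $h(x)=(1-|f(x)|)f(x)/|f(x)|$ and $|h|\le w$ pointwise. This cannot work: $w$ vanishes on the nonempty set $\{|f|=1\}$ (so the theorem, which needs a strictly positive weight, does not apply), and the pointwise domination $|h|\le 1-|f|$ is outright impossible in examples. Take $A$ the disk algebra and $f=\exp(U+i\widetilde{U})$ an outer function, continuous on $\overline{D}$, with $|f|=1$ on an arc $I\subset\bbT$ and $0<|f(x)|<1$ at some $x\in\bbT\setminus \overline{I}$; any $h\in A$ with $|h|\le 1-|f|$ on $\bbT$ vanishes on $I$ and hence is identically zero, so no such $g$ exists. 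What the argument actually needs is only the sup-norm estimate $d(f,F_{x,f(x)/|f(x)|})\le 1-|f(x)|$ (an infimum, not necessarily attained), and establishing it requires a perturbation $g-f$ that is uniformly small without being pointwise dominated by $1-|f|$. This is precisely the content of the paper's ``additive Bishop's lemma'' (Section 5): one builds peaking functions $u_r$ with range in a rhombus $R$, so that $g_{r+}=(\frac{\alpha}{|\alpha|}-r\alpha)u_r+rf$ stays in $S(A)$ while $\|g_{r+}-f\|\le(1-r|\alpha|)+(1-r)\to 1-|\alpha|$. That construction (the rhombus, the estimate $\|1-2u_r\|\le1$, and the series $u_0\sum u_n/2^n$ controlling the level sets of $|f-\alpha|$) is the real technical heart of the proof and is the piece missing from, and not replaceable by, your proposed interpolation step. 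Once that lemma is in hand, the rest of your argument goes through essentially as written.
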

In the following sections we prepare several lemmata to prove Theorem \ref{main}. We exhibit a proof of the theorem in  Section \ref{sec7}.
\section{Maximal convex sets}\label{sec3}
The following lemma is well known (cf. \cite[p. 108]{mp} and \cite[p. 339]{tl2013}).  A proof of the existence of $\varphi$ in the unit sphere of the dual space $E^*$ appears in \cite[Lemma 3.3]{tanaka}. For a convenience of the readers we present a proof that $\varphi$ can be chosen so that it is an extreme point of the closed unit ball of $E^*$.
\begin{lemma}\label{mcsbybaos}
Let $E$ be a Banach space. Suppose that $F$ is a maximal convex subset of the unit sphere $S(E)$ of $E$.
Then there exists an extreme point $\varphi$ of the closed unit ball $B(E^*)$ of $E^*$ such that $\varphi^{-1}(1)\cap S(E)=F$.
\end{lemma}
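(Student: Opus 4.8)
The plan is to use a Hahn–Banach separation argument to produce a norm-one functional whose level set at $1$ contains $F$, then upgrade it to an extreme point of $B(E^*)$ via a Krein–Milman / weak*-compactness argument, checking that nothing is lost in the process. First I would note that $F$, being a maximal convex subset of $S(E)$, is in particular nonempty, convex, and closed (its closure is again a convex subset of $S(E)$ by continuity of the norm, hence equals $F$ by maximality). Fix $f_0 \in F$. Since $F$ is contained in $S(E)$ and the open unit ball is convex and disjoint from $F$, the geometric Hahn–Banach theorem gives a real-linear continuous functional separating them; normalizing and using the standard identification of real-linear functionals with complex-linear ones, one obtains $\psi \in S(E^*)$ with $\Real \psi \le 1$ on $B(E)$ and $\Real \psi = 1$ on $F$. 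Because $\|\psi\| = 1$ and $\Real \psi(f) = 1$ with $\|f\|=1$ forces $\psi(f) = 1$, we get $F \subseteq \psi^{-1}(1) \cap S(E)$. The reverse inclusion $\psi^{-1}(1)\cap S(E) \subseteq F$ follows from maximality of $F$: the set $\psi^{-1}(1)\cap S(E)$ is a convex subset of $S(E)$ containing $F$, so it equals $F$.

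Next I would handle the extremality. Consider the set
\begin{equation*}
\Lambda = \{\varphi \in B(E^*) : \varphi = 1 \text{ on } F\}.
\end{equation*}
This is a nonempty (it contains $\psi$), convex, weak*-closed, hence weak*-compact subset of $B(E^*)$. By the Krein–Milman theorem it has an extreme point $\varphi$; I claim this $\varphi$ is actually an extreme point of the whole ball $B(E^*)$. Indeed, suppose $\varphi = \tfrac12(\varphi_1 + \varphi_2)$ with $\varphi_1, \varphi_2 \in B(E^*)$. Fixing $f_0 \in F$, we have $1 = \varphi(f_0) = \tfrac12(\varphi_1(f_0) + \varphi_2(f_0))$ with $|\varphi_i(f_0)| \le 1$, which forces $\varphi_1(f_0) = \varphi_2(f_0) = 1$; more generally, evaluating at any $f \in F$ gives $\varphi_i(f) = 1$ for $i = 1,2$ by the same convexity-of-the-disk argument. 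Hence $\varphi_1, \varphi_2 \in \Lambda$, and since $\varphi$ is extreme in $\Lambda$ we conclude $\varphi_1 = \varphi_2 = \varphi$. Thus $\varphi$ is an extreme point of $B(E^*)$ with $\varphi = 1$ on $F$, and as above $\varphi^{-1}(1)\cap S(E)$ is a convex subset of $S(E)$ containing $F$, so by maximality $\varphi^{-1}(1)\cap S(E) = F$.

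The main obstacle, and the only genuinely delicate point, is the extremality upgrade: one must verify that an extreme point of the smaller weak*-compact face $\Lambda$ remains extreme in the larger ball $B(E^*)$, and the key observation making this work is that $\Lambda$ is itself a face of $B(E^*)$ — i.e. if a convex combination of ball elements lands in $\Lambda$, the summands already lie in $\Lambda$ — which is exactly what the evaluation-at-$f \in F$ computation establishes. Everything else (nonemptiness and closedness of $F$, the separation step, the two applications of maximality of $F$) is routine. One should also take a moment at the start to dispense with the trivial normalization issues: if $\R$ is the scalar field the "disk" arguments become "interval" arguments with no change, and the identification of real-linear and complex-linear functionals is the standard one, so the argument is uniform over $\R$ and $\C$.
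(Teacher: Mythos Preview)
Your argument is correct and follows essentially the same route as the paper: obtain a norm-one functional with $F\subset\psi^{-1}(1)\cap S(E)$ (the paper cites \cite[Lemma 3.3]{tanaka} for this, whereas you supply the Hahn--Banach separation explicitly), use maximality of $F$ for the reverse inclusion, then apply Krein--Milman to the weak$^*$-compact convex set of such functionals and observe that this set is a face of $B(E^*)$, so its extreme points are extreme in $B(E^*)$. The only cosmetic difference is that the paper takes $\Phi=\{\chi\in S(E^*):\chi^{-1}(1)\cap S(E)=F\}$ while you take $\Lambda=\{\varphi\in B(E^*):\varphi|_F=1\}$; since $F\ne\emptyset$ forces $\|\varphi\|=1$ for $\varphi\in\Lambda$, and maximality then gives $\varphi^{-1}(1)\cap S(E)=F$, these sets coincide.
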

\begin{proof}
It is well known that there exists $\chi$ in the unit sphere $S(E^*)$ of the dual space $E^*$ of $E$ such that $\chi^{-1}(1)\cap S(E)=F$ (cf. \cite[Lemma 3.3]{tanaka} ). We show that $\chi$ can be chosen so that it is an extreme point of the unit ball $B(E^*)$ of $E^*$. Put $\Phi=\{\chi\in S(E^*):\chi^{-1}(1)\cap S(E)=F\}$. It is routine work to show that $\Phi$ is a non-empty closed convex subset of  $B(E^*)$. Then  the Krein-Milman theorem asserts that there is an extreme point $\varphi$ in $\Phi$. We prove that $\varphi$ is also an extreme point in $B(E^*)$. Suppose that $\varphi=(\varphi_1+\varphi_2)/2$ for $\varphi_1,\varphi_2\in B(E^*)$. Then 
\[
1=\varphi(f)=(\varphi_1(f)+\varphi_2(f))/2
\]
and 
\[
|\varphi_1(f)|\le1,\,\,|\varphi_2(f)|\le 1
\]
assert that $\varphi_1(f)=\varphi_2(f)=1$ for every $f\in F$. Hence we have $\varphi_1$, $\varphi_2 \in \Phi$. As $\varphi$ is an extreme point of $\Phi$, we see that $\varphi_1=\varphi_2$, so that $\varphi$ is an extreme point of $B(E^*)$ and $\varphi^{-1}(1)\cap S(E)=F$.
\end{proof} 
We show that
a maximal convex subset of $S(A)$ for a uniform algebra $A$ has a specific form.
We say that a non-empty closed subset $K$ of $X$ is a peak set for $A$ if $K=f^{-1}(1)$ for a function $f\in A$ such that $\|f\|=1$ and $|f|^{-1}(1)=f^{-1}(1)$. Such a function $f$ is called a peaking function for $K$. 
An intersection of peak sets is called a weak peak set (peak set in the weak sense in \cite{br}). 
When a peak set (resp. weak peak set) is a singleton, the unique element in $K$ is called a peak point (resp. weak peak point). 
The Choquet boundary of $A$  is denoted by $\ch(A)$.
It is known that $\ch(A)$ consists of the weak peak points and $\ch(A)$ is a boundary for $A$ (cf. \cite[Theorems 2.2.9, 2.3.4]{br}).

For a uniform algebra $A$ every maximal convex subset of $S(A)$ is represented as  $\{f\in S(A):f(x)=\lambda\}$ for $x\in \ch(A)$ and  $\lambda\in \bbT$, where $\bbT$ denotes the unit circle in the complex plane throughout the paper.
\begin{lemma}\label{0}
Let $A$ be a uniform algebra on $X$. 
Then a subset $F$ of $S(A)$ is a maximal convex subset of $S(A)$ if and only if there exists a Choquet boundary point $x\in \ch(A)$ and $\lambda\in \bbT$ such that
$F=\{f\in S(A):f(x)=\lambda\}$.
\end{lemma}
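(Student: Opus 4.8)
The plan is to prove both implications by exploiting the concrete structure of the dual unit ball of a uniform algebra, together with Lemma \ref{mcsbybaos}.

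First I would establish the ``if'' direction, which is the easier one. Fix $x\in\ch(A)$ and $\lambda\in\bbT$, and set $F=\{f\in S(A):f(x)=\lambda\}$. The evaluation functional $\delta_x$ given by $f\mapsto f(x)$ lies in $S(A^*)$, and since $x$ is a weak peak point (being a Choquet boundary point), $\delta_x$ is in fact an extreme point of $B(A^*)$; indeed if $\delta_x=(\psi_1+\psi_2)/2$ then evaluating at a peaking function $g$ for a weak peak set shrinking to $x$ forces $\psi_1(g)=\psi_2(g)=1$, and a standard argument (passing to an intersection of peak sets and using that point masses at weak peak points are the only norm-one functionals attaining $1$ at the corresponding peaking functions) yields $\psi_1=\psi_2=\delta_x$. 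Then $\overline\lambda\,\delta_x$ is also an extreme point of $B(A^*)$, and $(\overline\lambda\,\delta_x)^{-1}(1)\cap S(A)=\{f\in S(A):f(x)=\lambda\}=F$. Since the set of norm-one elements on which a norm-one functional attains its norm is a convex subset of $S(A)$, and maximality follows from the fact that $\overline\lambda\,\delta_x$ is extreme (any strictly larger convex set in $S(A)$ would be separated by a functional, contradicting extremality as in Lemma \ref{mcsbybaos}), we conclude $F$ is a maximal convex subset of $S(A)$.

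For the ``only if'' direction, let $F$ be a maximal convex subset of $S(A)$. By Lemma \ref{mcsbybaos} there is an extreme point $\varphi$ of $B(A^*)$ with $\varphi^{-1}(1)\cap S(A)=F$. The crux is to identify $\varphi$: I claim $\varphi=\lambda\,\delta_x$ for some $x\in\ch(A)$ and $\lambda\in\bbT$. This rests on the description of the extreme points of the dual unit ball of a uniform algebra --- by the Arens--Kelley type theorem for uniform algebras (see \cite{br}), every extreme point of $B(A^*)$ has the form $\lambda\,\delta_x$ with $\lambda\in\bbT$ and $x\in\ch(A)$. Granting this, $F=\varphi^{-1}(1)\cap S(A)=\{f\in S(A):\lambda f(x)=1\}=\{f\in S(A):f(x)=\overline\lambda\}$, which is of the desired form (with $\overline\lambda$ playing the role of the circle parameter).

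The main obstacle is precisely this identification of extreme points of $B(A^*)$ with scalar multiples of evaluations at Choquet boundary points, and I should decide how much of it to prove versus cite. The cleanest route is: an extreme point $\varphi$ of $B(A^*)$, viewed inside $B(C(X)^*)$ after a Hahn--Banach norm-preserving extension, is an extreme point there too (a short argument using that the extension is unique on a dense-enough set, or restricting back), hence by the Arens--Kelley theorem equals $\lambda$ times a point mass; the point must lie in the Choquet boundary since that is the intersection of all closed boundaries / the set of points whose evaluations remain extreme upon restriction to $A$. I would invoke \cite[Theorems 2.2.9, 2.3.4]{br} for the characterization of $\ch(A)$ via weak peak points and for the relevant representation of functionals, and spell out only the extremality bookkeeping. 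One delicate point to handle carefully is that I must verify $\ch(A)$ is nonempty and that for such $x$ the set $F$ is genuinely nonempty --- but a peaking function $h$ for a weak peak set shrinking to $x$, suitably rotated to $\overline\lambda h$, belongs to $F$, so this is immediate.
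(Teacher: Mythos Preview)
Your ``only if'' direction follows the paper's approach: invoke Lemma~\ref{mcsbybaos} to get an extreme point of $B(A^*)$, and then identify the extreme points of $B(A^*)$ as $\gamma\delta_x$ with $x\in\ch(A)$, $\gamma\in\bbT$. The paper simply cites this identification (from \cite{fj1}); your Hahn--Banach sketch can be made to work, but only after choosing an \emph{extreme} point of the set of norm-preserving extensions and checking that it is then extreme in $B(C(X)^*)$ --- an arbitrary Hahn--Banach extension need not be extreme.

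The genuine gap is in your ``if'' direction. You assert that maximality of $F=\{f\in S(A):f(x)=\lambda\}$ ``follows from the fact that $\overline\lambda\,\delta_x$ is extreme \dots\ contradicting extremality as in Lemma~\ref{mcsbybaos}.'' But Lemma~\ref{mcsbybaos} only says that every maximal convex subset is the $1$-level set of \emph{some} extreme functional; it does not say that the $1$-level set of every extreme functional is maximal. Your parenthetical (``any strictly larger convex set would be separated by a functional'') produces a possibly \emph{different} extreme functional $\psi$ whose $1$-level set contains $F$, and nothing you have written forces $\psi=\overline\lambda\,\delta_x$. So the argument as stated is circular.

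The paper repairs this by reversing the order: it proves the ``only if'' direction first, and then for the ``if'' direction it enlarges $F$ via Zorn to a maximal convex set $C$, applies the already-proved ``only if'' to write $C=\{f\in S(A):f(x_0)=\lambda_0\}$, and finally uses a peaking function at $x$ (available since $x\in\ch(A)$ is a weak peak point) to show $x_0=x$ and hence $\lambda_0=\lambda$, giving $C=F$. You will need exactly this peaking-function step (or an equivalent separation of distinct Choquet boundary points) to close the gap; the abstract extremality of $\overline\lambda\,\delta_x$ is not enough by itself.
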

\begin{proof}
Suppose that $F$ is a maximal convex subset of $S(A)$. By Lemma \ref{mcsbybaos} there exists an extreme point $\varphi$ in $B(A^*)$ with 
$F=\varphi^{-1}(1)\cap S(A)$. It is well known that each extreme point in $B(A^*)$ is of the form $\gamma\delta_x$ for $x\in \ch(A)$ and $\gamma\in \bbT$, where $\delta_x$ denotes the point evaluation at $x$ (cf. \cite[Corollary 2.3.6 and Definition 2.3.7]{fj1}).  
It follows that $F=\{f\in S(A):f(x)=\lambda\}$, where $\lambda=\overline{\gamma}$.

Conversely suppose that $x\in \ch(A)$, $\lambda\in \bbT$ and $F=\{f\in S(A):f(x)=\lambda\}$. By a simple calculation we have that $F$ is a convex subset of $S(A)$. We prove that it is maximal. Applying Zorn's lemma there exists a maximal convex subset $C$ of $S(A)$ with $F\subset C$. Then by the first part of the proof, there exist $x_0\in \ch(A)$ and $\lambda_0\in \bbT$ such that $C=\{f\in S(A):f(x_0)=\lambda_0\}$.  Thus we have
\[
\{f\in S(A):f(x)=\lambda\}\subset \{f\in S(A):f(x_0)=\lambda_0\}.
\]
We prove $x_0=x$. Suppose not.
Since a Choquet boundary point is a weak peak point \cite[Theorem 2.3.4]{br}, there exists $g\in A$ such that $g(x)=1=\|g\|$ and 
$|g(x_0)|<1/2$. Then $\lambda g\in \{f\in S(A):f(x)=\lambda\}$ and $\lambda g\not\in \{f\in S(A):f(x_0)=\lambda_0\}$, which is a contradiction. 
Thus  we have $x_0=x$, and  $\lambda_0=\lambda$ follows. Hence we have $C=F$.
\end{proof}
In the following we denote the maximal convex subsets of $S(A)$  by \[
F_{x,\lambda}
=\{f\in S(A):\,\,f(x)=\lambda\},
\]
for $x\in \ch(A)$ and $\lambda\in \bbT$. 
\section{Maps between maximal convex sets}\label{sec4}
The following may be known for the readers who are familiar with  uniform algebras. For the convenience of general readers we exhibit it with a short proof.  
\begin{lemma}\label{preparatory}
Suppose that $y_1,y_2\in \ch(A)$ are different points. Then for any pair $\mu_1,\mu_2\in \bbT$, there exists $g\in S(A)$ such that $g(y_1)=\mu_1$ and $g(y_2)=\mu_2$.
\end{lemma}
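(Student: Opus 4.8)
The plan is to reduce the two-point interpolation problem to the known fact that Choquet boundary points are weak peak points. First I would use the fact (from \cite[Theorem 2.3.4]{br}) that each $y_i$ is a weak peak point, so there exist peaking functions that are close to $1$ at $y_i$ and small at $y_j$ for $j \neq i$. More precisely, for any $\varepsilon > 0$ I can find $h_1 \in A$ with $\|h_1\| = 1$, $h_1(y_1) = 1$, and $|h_1(y_2)| < \varepsilon$, and symmetrically $h_2 \in A$ with $\|h_2\| = 1$, $h_2(y_2) = 1$, $|h_2(y_1)| < \varepsilon$. By replacing $h_i$ with a high power $h_i^{n}$ (which stays in $A$, keeps norm $1$, keeps the value $1$ at $y_i$, and drives the value at $y_j$ even closer to $0$), I get functions that are essentially idempotent-like: nearly $1$ at one point and nearly $0$ at the other.

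Next I would form the candidate $g = \mu_1 h_1^{n} + \mu_2 h_2^{n}(1 - h_1^{n})$ or a similar convex-type combination, arranged so that $g(y_1) = \mu_1$ exactly (since $h_1^n(y_1) = 1$ makes the second term vanish) and $g(y_2) = \mu_2 h_2^n(y_2)(1 - h_1^n(y_2)) + \mu_1 h_1^n(y_2)$, which is within $O(\varepsilon)$ of $\mu_2$. The issue is that this only gives approximate interpolation at $y_2$ and, more seriously, the norm $\|g\|$ need not be exactly $1$. To fix the value at $y_2$ exactly, I would instead argue by a limiting/normalization argument, or better, observe that the set of achievable pairs $(g(y_1), g(y_2))$ for $g \in A$ with $\|g\| \le 1$ is a closed convex balanced subset of $\C^2$ whose closure contains points arbitrarily near $(\mu_1, \mu_2) \in \bbT^2$; combined with the fact that $(\mu_1, \mu_2)$ lies on the distinguished boundary, one concludes it is actually attained. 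Alternatively — and this is cleaner — I note that since $\ch(A)$ with the two points $y_1, y_2$ is a two-point interpolation set (a finite subset of the Choquet boundary is always a peak interpolation set by the Bishop–type interpolation theorems for uniform algebras), there is $g \in A$ with $g(y_i) = \mu_i$ and moreover $\|g\| = \max(|\mu_1|, |\mu_2|) = 1$, so $g \in S(A)$.

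The main obstacle is controlling the norm: getting $g(y_1) = \mu_1$ and $g(y_2) = \mu_2$ is easy by separation and taking powers, but ensuring $\|g\| = 1$ rather than merely $\|g\| \ge 1$ requires either the peak-interpolation machinery or a careful explicit construction where the peaking functions are chosen with disjoint "support" behavior so that $|g| \le 1$ everywhere. I expect the authors take the explicit route: choose $h_1, h_2$ peaking functions for weak peak sets containing $y_1, y_2$ respectively, with the weak peak sets small enough (hence the peak functions controlled enough) that a combination like $g = \mu_1 u + \mu_2 v$ with $u, v$ suitable powers satisfies $|u| + |v| \le 1$ pointwise, $u(y_1) = 1$, $v(y_1) = 0$, $u(y_2) = 0$, $v(y_2) = 1$; then $g \in S(A)$ automatically and $g(y_i) = \mu_i$ exactly.
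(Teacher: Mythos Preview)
Your one-line appeal to the Bishop peak-interpolation theorem (finite subsets of $\ch(A)$ are peak interpolation sets) is correct and would settle the lemma immediately. The paper, however, does not invoke this as a black box; it proves the needed special case from scratch, and its explicit construction is genuinely different from the one you sketch.

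Your guessed explicit route has a real gap. You want $u,v\in A$ with $u(y_1)=1$, $u(y_2)=0$, $v(y_1)=0$, $v(y_2)=1$ and $|u|+|v|\le 1$ pointwise, obtained as powers of peaking functions. But powers of a peaking function $h_2$ with $|h_2(y_1)|<\varepsilon$ give $|h_2^n(y_1)|<\varepsilon^n$, which is small but never exactly $0$; you cannot in general force a peaking function in a uniform algebra to vanish at a prescribed second point. So the additive partition-of-unity picture with exact values at both endpoints is not available.

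The paper's route is multiplicative rather than additive. First it shows that the two-point set $\{y_1,y_2\}$ is itself a weak peak set (using that each $y_j$ is a weak peak point and combining peaking functions via $\exp(-(1-f_t^1)^{1/2}(1-f_s^2)^{1/2})$ to peak on unions). Then it picks any $h\in A$ with $h(y_j)=\mu_j$ (easy, by separation and constants). If $\|h\|>1$, it stratifies $\{|h|>1\}$ into compact shells $L_n=\{1+2^{-(n+1)}\le|h|\le 1+2^{-n}\}$, chooses for each $n$ a peaking function $u_n$ for a peak set containing $\{y_1,y_2\}$ with $|u_n|$ small on $L_n$, and sets $u=u_0\sum_{n\ge 1}u_n/2^n$. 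The product $g=hu$ then satisfies $g(y_j)=\mu_j$ exactly (since $u=1$ at both points) and $\|g\|=1$ by a shell-by-shell estimate. The key difference from your sketch is that the authors peak at the \emph{pair} $\{y_1,y_2\}$ simultaneously and use this to damp $h$ down, rather than trying to build $g$ as a sum of single-point peaking pieces.
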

\begin{proof}
First we prove that $\{y_1,y_2\}$ is a weak peak set. Since $y_1$ and $y_2$ are weak peak points, there exist possibly infinitely many peak sets $K_t^1$ and $K_s^2$ such that $\{y_1\}=\cap_tK_t^1$ and $\{y_2\}=\cap_sK_s^2$. Let $f_t^1$ and $f_s^2$ be peaking functions for $K_t^1$ and $K_s^2$, respectively. Let $(1-f_t^1)^{\frac12}$ and $(1-f_s^2)^{\frac12}$ be the  principal values. By a simple calculation we see that $\exp(- (1-f_t^1)^{\frac12}(1-f_s^2)^{\frac12})$ is a peaking function for $K_t^1\cup K_s^2$. As $\{y_1,y_2\}=\cap_{t,s}(K_t^1\cup K_s^2)$ we conclude that $\{y_1,y_2\}$ is a weak peak set.

Since $A$ separates the points of $X$ and contains constant functions, it is straightforward to see that there exists $h\in A$ with $h(y_1)=\mu_1$ and $h(y_2)=\mu_2$. If $\|h\|=1$, then $h$ is the desired function $g$. Suppose that $\|h\|>1$. 
Put $L_0=\{y\in X:|h(y)|\ge 1+1/2\}$. For a positive integer $n$, put $L_n=\{y\in X:1+1/2^{n+1}\le |h(y)|\le 1+1/2^n\}$. Then $\cup_{n=0}^\infty L_n=\{y\in X:|h(y)|>1\}$. 
Recall that  $\{y_1,y_2\}$ is a weak peak set, say $\{y_1,y_2\}=\cap_iK_i$ where $K_i'$s are possibly infinitely many peak sets, and $\{y_1,y_2\}\cap L_0=\emptyset$. 
So  $L_0\subset Y\setminus\{y_1,y_2\}=\cup_i(Y\setminus K_i)$, where $Y\setminus K_i$ is an open set.
As $L_0$ is compact, there exists a finite number of $K_{i_1},\cdots, K_{i_m}$  such that $L_0 \subset \cup_{j=1}^m(Y\setminus K_{i_j})$.
Letting $K=\cap_{j=1}^mK_{i_j}$ we have
$\{y_1,y_2\}\subset K$ and $K\cap L_0=\emptyset$. 
Note that $K$ is a peak set with a peaking function $f=\prod_{j=1}^mf_j$, where $f_j$ is the corresponding peaking function for $K_{i_j}$.
Put $u_0=f^M$  with a sufficiently large natural number $M$. We may assume that $u_0\in B$ is a peaking function for $K$ such that $|u_0|<1/\|h\|$ on $L_0$. 
In the same way for each positive integer $n$, there exists a peaking function $u_n\in A$ which peaks on some peak set which contains $\{y_1,y_2\}$ such that $|u_n|<1/(2^n+1)$ on $L_n$. Put $u=u_0\sum_{n=1}^\infty(u_n/2^n)$. Then we see that $hu$ is the desired function $g$ as follows. Since $u=1$ on $\{y_1,y_2\}$, we have $hu(y_1)=\mu_1$ and $hu(y_2)=\mu_2$. We prove that $\|hu\|=1$. Let $y\in L_0$. Then
\[
|hu(y)|\le |h(y)||u_0(y)|\le 1.
\]
Let $y\in L_n$. Then
\begin{multline*}
|hu(y)|\le |h(y)|\left(\left|\frac{u_n(y)}{2^n}\right|+\sum_{k\ne n}\frac{1}{2^k}\right)\\
\le \left(1+\frac{1}{2^n}\right)\left(\frac{1}{2^n(2^n+1)}+1-\frac{1}{2^n}\right)=1.
\end{multline*}
Thus $|hu|\le 1$ on $\cup_{n=0}^\infty L_n$. Let $y\in X\setminus \cup_{n=0}^\infty L_n$. As  $\cup_{n=0}^\infty L_n=\{y\in Y:|h(y)|>1\}$ we have 
$|h(y)|\le 1$, and so $|hu(y)|\le 1$. It follows that $\|hu\|=1$. Thus $hu$ is the desired function $g$.
\end{proof}
In the following lemma, we establish a parametric correspondence between the maximal convex sets of the unit spheres which is a key gradient of the proof of the main result.  
To prove the lemma we apply the fact that a surjective isometry between the spheres of two Banach spaces preserves the maximal convex subsets of the unit spheres. This is originally due to Cheng and Dong  \cite[Lemma 5.1]{chengdong}. On the other hand, in the proof of  Lemma 5.1 they seem to apply a non-trivial argument such as  $C\cap E_1$ is a maximal convex subset of the unit sphere of a subspace $E_1$ of a given Banach space $E$ provided that  $C$ is a maximal convex subset of the unit sphere of $E$ without applying a further explanation. In fact, Remark 2.3 in \cite{tl2013} gives a counterexample that $C\cap E_1$ needs not be maximal, in general.  The authors of this paper are not sure that the proof of Cheng and Dong is valid or not at this point.
Fortunately Lemma 5.1 in \cite{chengdong} was later proved by Tanaka \cite[Lemma 3.5]{tanaka2014b} (cf. \cite[Lemma 6.3]{tanaka}).  Although the proof of  \cite[Lemma 3.5]{tanaka2014b} was done for the real case, it is also effective for the complex case by considering complex spaces as real spaces.
\begin{lemma}\label{mapsbtmcs}
There exists a bijection $\phi:\ch(A)\to \ch(B)$ and a map $\tau:\ch(A)\times {\mathbb{T}}\to {\mathbb{T}}$ such that 
\[
T(F_{x,\lambda})=F_{\phi(x),\tau(x,\lambda)}
\]
for every maximal convex subset $F_{x,\lambda}$ of $S(A)$. 
\end{lemma}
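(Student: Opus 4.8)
The plan is to combine the Cheng–Dong/Tanaka principle that a surjective isometry between unit spheres carries maximal convex subsets onto maximal convex subsets with the explicit description of these sets obtained in Lemma~\ref{0}. Since $T:S(A)\to S(B)$ is a surjective isometry, for each maximal convex subset $F_{x,\lambda}$ of $S(A)$ the image $T(F_{x,\lambda})$ is a maximal convex subset of $S(B)$, and $T^{-1}$ has the same property; by Lemma~\ref{0} applied to $B$ there are uniquely determined $y\in\ch(B)$ and $\mu\in\bbT$ with $T(F_{x,\lambda})=F_{y,\mu}$. So we may \emph{define} maps $\phi_0:\ch(A)\times\bbT\to\ch(B)$ and $\tau:\ch(A)\times\bbT\to\bbT$ by $T(F_{x,\lambda})=F_{\phi_0(x,\lambda),\tau(x,\lambda)}$. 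Applying the same reasoning to $T^{-1}$ gives maps in the reverse direction, and since $F_{x,\lambda}\mapsto T(F_{x,\lambda})\mapsto T^{-1}(T(F_{x,\lambda}))=F_{x,\lambda}$, the assignment $F_{x,\lambda}\mapsto F_{\phi_0(x,\lambda),\tau(x,\lambda)}$ is a bijection between the families of maximal convex subsets of $S(A)$ and $S(B)$.

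The real content is to show that $\phi_0(x,\lambda)$ does not actually depend on $\lambda$, so that it descends to a bijection $\phi:\ch(A)\to\ch(B)$; the $\lambda$-dependence of the second coordinate is allowed and is absorbed into $\tau$. First I would fix $x\in\ch(A)$ and two unimodular scalars $\lambda\ne\lambda'$ and suppose toward a contradiction that $\phi_0(x,\lambda)=y$ while $\phi_0(x,\lambda')=y'$ with $y\ne y'$. The idea is that $F_{x,\lambda}$ and $F_{x,\lambda'}$ are ``far apart'' in a way incompatible with their images straddling two distinct Choquet boundary points of $B$. Concretely, since $x$ is a weak peak point of $A$, one can find peaking functions forcing $\operatorname{dist}(F_{x,\lambda},F_{x,\lambda'})=|\lambda-\lambda'|$: the lower bound is automatic from evaluation at $x$, and for the upper bound one takes $f\in F_{x,\lambda}$, $f'\in F_{x,\lambda'}$ and interpolates using a peaking function $u$ with $u(x)=1$ and $|u|$ small off a tiny neighbourhood of $x$, exactly as in the construction in Lemma~\ref{preparatory}, to build a function agreeing with $\lambda,\lambda'$ at $x$ for the two members while keeping the sup of the difference close to $|\lambda-\lambda'|$. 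On the $B$ side, using Lemma~\ref{preparatory} (valid since $y\ne y'$ are distinct points of $\ch(B)$) one produces, for any prescribed values, functions $g\in F_{y,\mu}$ and $g'\in F_{y',\mu'}$ with $\|g-g'\|$ as large as $2$; more to the point, one shows $\operatorname{dist}(F_{y,\mu},F_{y',\mu'})=0$ because one can choose $g\in F_{y,\mu}$ with $g(y')=\mu'$, hence $g\in F_{y,\mu}\cap F_{y',\mu'}$, contradicting the isometry (distances between corresponding maximal convex sets must match). This forces $y=y'$, i.e.\ $\phi_0(x,\cdot)$ is constant; write $\phi(x)$ for this common value.

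It remains to check that $\phi:\ch(A)\to\ch(B)$ so defined is a bijection. Surjectivity: given $y\in\ch(B)$, apply $T^{-1}$ to $F_{y,1}$ to get some $F_{x,\lambda}$, whence $y=\phi_0(x,\lambda)=\phi(x)$. Injectivity: if $\phi(x_1)=\phi(x_2)=y$ with $x_1\ne x_2$, then for suitable $\lambda_1,\lambda_2$ we have $T(F_{x_1,\lambda_1})=F_{y,\mu_1}$ and $T(F_{x_2,\lambda_2})=F_{y,\mu_2}$; but running the previous paragraph's argument on $T^{-1}$ (with the roles of $A$ and $B$ swapped) applied to $F_{y,\mu_1}$ and $F_{y,\mu_2}$ shows the two preimages must share the same Choquet point, i.e.\ $x_1=x_2$, a contradiction. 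With $\phi$ a bijection and $\tau$ defined on $\ch(A)\times\bbT$ by $\tau(x,\lambda)=$ (the second coordinate of the maximal convex set $T(F_{x,\lambda})$), we obtain $T(F_{x,\lambda})=F_{\phi(x),\tau(x,\lambda)}$ for every $x\in\ch(A)$ and $\lambda\in\bbT$, as required. I expect the delicate point to be the metric comparison of maximal convex sets---making rigorous that $\operatorname{dist}(F_{x,\lambda},F_{x,\lambda'})$ is large while two maximal convex sets of $S(B)$ sitting over distinct Choquet points can be made to intersect (distance $0$)---which is where the weak-peak-point machinery of Lemma~\ref{preparatory} does the work.
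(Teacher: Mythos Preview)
Your proof is correct and follows essentially the same route as the paper: both use the Cheng--Dong/Tanaka lemma to map maximal convex sets to maximal convex sets, then invoke Lemma~\ref{preparatory} to produce a single function $g\in F_{y,\mu}\cap F_{y',\mu'}$ when $y\ne y'$, contradicting the disjointness of $F_{x,\lambda}$ and $F_{x,\lambda'}$; bijectivity of $\phi$ then comes from running the same argument on $T^{-1}$. Your detour through the exact value $\operatorname{dist}(F_{x,\lambda},F_{x,\lambda'})=|\lambda-\lambda'|$ (in particular the upper bound) is more than you need---the paper just pulls $g$ back via injectivity of $T$ to a point of $F_{x,\lambda}\cap F_{x,\lambda'}=\emptyset$---but the argument is sound.
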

\begin{proof}
Let $F_{x,\lambda}$ and $F_{x',\lambda'}$ with $x,x'\in \ch(A)$ and $\lambda, \lambda'\in {\mathbb T}$ be maximal convex subsets of $S(A)$. 
First we point out that $F_{x,\lambda}=F_{x',\lambda'}$  ensures that $x=x'$ and $\lambda=\lambda'$. It is straightforward that the equality $x=x'$ holds by Lemma \ref{preparatory}.  Then $\lambda=\lambda'$ follows.  
By \cite[Lemma 3.5]{tanaka2014b}
$T(F_{x,\lambda})$ is a 
maximal convex subset of $S(B)$. Hence by Lemma \ref{0} we obtain $\phi(x,\lambda)\in \ch(B)$ and $\tau(x,\lambda)\in \bbT$ such that 
\[
T(F_{x,\lambda})=F_{\phi(x,\lambda),\tau(x,\lambda)}.
\]
We prove that $\phi(x,\lambda)$ does not depend on the second term $\lambda$.  Suppose that there exists $x\in \ch(A)$ and $\lambda, \lambda'\in \bbT$ such that $\phi(x,\lambda)\ne \phi(x, \lambda')$.
Then by Lemma \ref{preparatory} there exists $g\in S(B)$ such that 
\[
g(\phi(x,\lambda))=\tau(x,\lambda), \quad g(\phi(x,\lambda'))=\tau(x,\lambda').
\]
This means that $g\in F_{\phi(x,\lambda),\tau(x,\lambda)}=T(F_{x,\lambda})$ and $g\in F_{\phi(x,\lambda'),\tau(x,\lambda')}=T(F_{x,\lambda'})$.  Then there exists $f_1\in F_{x,\lambda}$ and $f_2\in F_{x,\lambda'}$ such that $T(f_1)=T(f_2)=g$. As $T$ is an injection, $f_1=f_2$. However it is impossible since 
$F_{x,\lambda}\cap F_{x,\lambda'}=\emptyset$. We have proved that $\phi(x,\lambda)$ is independent of $\lambda$. Thus we write 
$\phi(x)$ instead of $\phi(x,\lambda)$ and the conclusion holds.

Applying the same argument for $T^{-1}$ we get $\phi':\ch(B)\to \ch(A)$ and $\tau':\ch(B) \times {\mathbb T}\to {\mathbb T}$ such that $T^{-1}(F_{y,\lambda})=F_{\phi'(y),\tau'(y,\lambda)}$. Then we have
\[
F_{x,\lambda}=T^{-1}(T(F_{x,\lambda}))=T^{-1}(F_{\phi(x),\tau(x,\lambda)})=F_{\phi'(\phi(x)),\tau'(\phi(x),\tau(x,\lambda))}
\]
and 
\[
F_{y,\lambda}=T(T^{-1}(F_{y,\lambda}))=
T(F_{\phi'(y),\tau'(y,\lambda)})=F_{\phi(\phi'(y)), \tau(\phi'(y), \tau'(y,\lambda))}.
\]
Therefore $x=\phi'\circ\phi(x)$ for every $x\in \ch(A)$ and $y=\phi\circ\phi'(y)$ for every $y\in \ch(B)$ hold. Hence $\phi$ is a bijection.
\end{proof}
\begin{lemma}\label{txl}
Let $\ch(A)_+=\{x\in\ch(A):\frac{\tau(x,i)}{\tau(x,1)}=i\}$ and $\ch(A)_-=\{x\in \ch(A):\frac{\tau(x,i)}{\tau(x,1)}=\overline{i}\}$. Then $\ch(A)_+\cup\ch(A)_-=\ch(A)$. 
For every $\lambda\in\bbT$ we have
\begin{equation*}
\tau(x,\lambda) =
\begin{cases}
\lambda \tau(x,1),\qquad x\in \ch(A)_+, \\
\overline{\lambda}\tau(x,1), \qquad x\in \ch(A)_-.
\end{cases}
\end{equation*}
\end{lemma}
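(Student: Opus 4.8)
The plan is to show that, for each fixed $x\in\ch(A)$, the map $\lambda\mapsto\tau(x,\lambda)$ is a distance-preserving self-map of the unit circle $\bbT\subset\C$, and then to invoke the elementary classification of isometries of $\bbT$. To that end I would first record an almost free Lipschitz estimate. For $\mu\in\bbT$ the constant function $\mu\cdot 1$ lies in $S(A)$ and satisfies $(\mu\cdot 1)(x)=\mu$, hence $\mu\cdot 1\in F_{x,\mu}$; by Lemma \ref{mapsbtmcs}, $T(\mu\cdot 1)\in F_{\phi(x),\tau(x,\mu)}$, i.e. $T(\mu\cdot 1)(\phi(x))=\tau(x,\mu)$. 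Since $T$ preserves distances and $\phi(x)\in\ch(B)$,
\[
|\tau(x,\mu)-\tau(x,\nu)|=|T(\mu\cdot 1)(\phi(x))-T(\nu\cdot 1)(\phi(x))|\le\|T(\mu\cdot 1)-T(\nu\cdot 1)\|=\|\mu\cdot 1-\nu\cdot 1\|=|\mu-\nu|
\]
for all $\mu,\nu\in\bbT$, so $\tau(x,\cdot)\colon\bbT\to\bbT$ is $1$-Lipschitz for every $x\in\ch(A)$.

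Next I would upgrade this to an isometry by playing the same game for $T^{-1}$. Carrying out the construction of Lemma \ref{mapsbtmcs} for the surjective isometry $T^{-1}$ yields $\phi'=\phi^{-1}$ and a map $\tau'$ with $T^{-1}(F_{y,\mu})=F_{\phi'(y),\tau'(y,\mu)}$, and the estimate of the previous paragraph applied to $T^{-1}$ shows each $\tau'(y,\cdot)$ is $1$-Lipschitz. Feeding $F_{x,\lambda}$ through $T^{-1}\circ T=\id$ gives $F_{x,\lambda}=F_{\phi'(\phi(x)),\tau'(\phi(x),\tau(x,\lambda))}=F_{x,\tau'(\phi(x),\tau(x,\lambda))}$, and since the assignment $(a,\lambda)\mapsto F_{a,\lambda}$ is injective (as noted at the start of the proof of Lemma \ref{mapsbtmcs}) we get $\tau'(\phi(x),\tau(x,\lambda))=\lambda$ for all $\lambda$. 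Therefore, for $\mu,\nu\in\bbT$,
\[
|\mu-\nu|=|\tau'(\phi(x),\tau(x,\mu))-\tau'(\phi(x),\tau(x,\nu))|\le|\tau(x,\mu)-\tau(x,\nu)|\le|\mu-\nu|,
\]
so all inequalities are equalities and $\tau(x,\cdot)$ is a distance-preserving map of $\bbT$ into itself.

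Finally I would classify: every distance-preserving map $\sigma\colon\bbT\to\bbT$ satisfies $\sigma(z)=\sigma(1)z$ for all $z$, or $\sigma(z)=\sigma(1)\overline{z}$ for all $z$. Indeed, $\rho(z):=\overline{\sigma(1)}\,\sigma(z)$ is distance-preserving with $\rho(1)=1$, and $|\rho(i)-1|=\sqrt{2}$ together with $|\rho(i)|=1$ force $\rho(i)\in\{i,-i\}$; after possibly replacing $\rho$ by $z\mapsto\overline{\rho(z)}$ we may assume $\rho(i)=i$, and then for $z=e^{i\theta}$ the equations $|\rho(z)-1|=|e^{i\theta}-1|$ and $|\rho(z)-i|=|e^{i\theta}-i|$ pin down both the cosine and the sine of the argument of $\rho(z)$, giving $\rho(z)=z$. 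Hence $\sigma(z)=\sigma(1)z$ or $\sigma(z)=\sigma(1)\overline{z}$. Applying this with $\sigma=\tau(x,\cdot)$: in the first case $\tau(x,\lambda)=\lambda\tau(x,1)$ for every $\lambda$, so $\tau(x,i)/\tau(x,1)=i$ and $x\in\ch(A)_+$; in the second case $\tau(x,\lambda)=\overline{\lambda}\,\tau(x,1)$ for every $\lambda$, so $\tau(x,i)/\tau(x,1)=\overline{i}$ and $x\in\ch(A)_-$. Either way the displayed formula of the lemma holds, and as the two cases are exhaustive, $\ch(A)_+\cup\ch(A)_-=\ch(A)$.

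The only genuinely non-routine point is the realization in the first two steps that the crude bound coming from constant functions, together with its mirror image for $T^{-1}$, already rigidifies $\tau(x,\cdot)$ into a circle isometry; everything after that is the classical description of isometries of $\bbT$ plus bookkeeping. I would still double-check the harmless verifications used above: that $\mu\cdot 1\in S(A)$ with $(\mu\cdot 1)(x)=\mu$ so that $\mu\cdot 1\in F_{x,\mu}$, that point evaluation at $\phi(x)$ is dominated by the sup norm on $B$, and that the implication $F_{x,\lambda}=F_{x,\lambda'}\Rightarrow\lambda=\lambda'$ is precisely the injectivity observation recorded in the proof of Lemma \ref{mapsbtmcs}.
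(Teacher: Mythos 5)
Your proof is correct, but it reaches the conclusion by a genuinely different route from the paper. The paper only uses the one-sided estimate $|\mu-\nu|\ge|\tau(x,\mu)-\tau(x,\nu)|$ coming from evaluating $T$ on unimodular constants (its inequality \eqref{eq3-1}), and compensates for the missing reverse inequality by first proving $T(-\mu)=-T(\mu)$ for unimodular constants via \cite[Proposition 2.3]{mori} applied to intersections of the maximal faces $F_{x,\mu}$ (display \eqref{eq3-1.5}); the two resulting arc constraints, through $1$ and through $-1$, pin $\tau(x,\lambda)/\tau(x,1)$ down to $\{\lambda,\overline{\lambda}\}$, and a second pass with the points $\pm i$ in place of $\pm1$ resolves the sign consistently on $\ch(A)_+$ and $\ch(A)_-$. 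You instead obtain the reverse inequality directly by running the same Lipschitz estimate for $T^{-1}$ and using the identity $\tau'(\phi(x),\tau(x,\lambda))=\lambda$, which does follow from the computation and the injectivity remark already present in the proof of Lemma \ref{mapsbtmcs}; this turns $\tau(x,\cdot)$ into a genuine isometry of $\bbT$, after which the elementary classification of circle isometries (your three-point argument with $1$, $i$, and the real/imaginary part bookkeeping) finishes everything in one stroke. What your version buys is self-containedness: it avoids the external antipode-preservation result of Mori entirely, and it makes the dichotomy $\ch(A)=\ch(A)_+\cup\ch(A)_-$ fall out of a clean structural statement rather than a case analysis on $\operatorname{Im}\lambda$. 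What the paper's version buys is that it never needs the two-sided rigidity of $\tau(x,\cdot)$, only the forward bound plus oddness of $T$ on constants, which is the standard toolkit in this literature. Both arguments rest on the same basic mechanism, namely that $T(\mu\cdot 1)(\phi(x))=\tau(x,\mu)$, and I see no gap in yours.
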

\begin{proof}
Let $x\in \ch(A)$ and $\lambda\in {\mathbb T}$. Since $|\tau(\cdot,\cdot)|=1$ on $\ch(A)\times {\mathbb T}$ we have
\begin{multline}\label{eq3-1}
|1-\lambda|= \|1-\lambda\|=\|T(1)-T(\lambda)\|=\|\tau(\cdot,1)-\tau(\cdot, \lambda)\| \\
\ge |\tau(x,1)-\tau(x,\lambda)|=\left|1-\frac{\tau(x,\lambda)}{\tau(x,1)}\right|.
\end{multline}
Since $\ch(A)$ is a boundary, we have $\{\gamma\}=\cap_{x\in \ch(A)}F_{x,\gamma}$ for every unimodular constant function $\gamma \in A$. Then by Proposition 2.3 in \cite{mori} we have that
\begin{multline}\label{eq3-1.5}
\{T(-\mu)\}=T(\cap_{x\in\ch(A)}F_{x,-\mu})=\cap_{x\in\ch(A)}T(F_{x,-\mu})\\
=\cap_{x\in \ch(A)}T(-F_{x,\mu})
=\cap_{x\in \ch(A)}(-T(F_{x,\mu}))\\
=-\cap_{x\in \ch(A)}T(F_{x,\mu})
=-T(\cap_{x\in \ch(A)}F_{x,\mu})=\{-T(\mu)\},
\end{multline}
hence $T(-\mu)=-T(\mu)$ for every unimodular constant function $\mu$.
Applying \eqref{eq3-1.5} for $\mu=1$ we have 
\begin{multline}\label{eq3-2}
|-1-\lambda|=\|1+\lambda\|=\|-(-1)+\lambda\|=\|-T(-1)+T(\lambda)\| \\
=\|T(1)+T(\lambda)\| 
=\|\tau(\cdot,1)+\tau(\cdot,\lambda)\|
\ge|\tau(x,1)+\tau(x,\lambda)|=\left|-1-\frac{\tau(x,\lambda)}{\tau(x,1)}\right|
\end{multline}
Since $\tau(x,\lambda)/\tau(x,1)$ is a unimodular complex number, we have 
by the equation \eqref{eq3-1} that $\tau(x,\lambda)/\tau(x,1)$ is on the arc through $1$ of the unit circle with the end points $\lambda$ and $\overline{\lambda}$.  By the equation \eqref{eq3-2} we obtain that $\tau(x,\lambda)/\tau(x,1)$ is on the arc through $-1$ of the unit circle with the end points $\lambda$ and $\overline{\lambda}$. Therefore we have $\tau(x,\lambda)/\tau(x,1)\in \{\lambda,\overline{\lambda}\}$. Note that such a way of a proof is called 
Yoko-Kuwagata.
Hence $\tau(x,\lambda)=\lambda \tau(x,1)$ if $\lambda\in {\mathbb R}$. 
Besides, $\tau(x,i)/\tau(x,1)\in \{\pm i\}$ and thus $\ch(A)_+\cup\ch(A)_-=\ch(A)$.

We now prove $\tau(x,\lambda)=\lambda\tau(x,1)$ if $x\in \ch(A)_+$ and $\lambda\in \bbT$. We may suppose that $\operatorname{Im}(\lambda)\ne 0$. In the case $\operatorname{Im}(\lambda)>0$, we have
\begin{multline*}
|i-\lambda|=\|i-\lambda\|=\|T(i)-T(\lambda)\|=\|\tau(\cdot,i)-\tau(\cdot,\lambda)\| \\
\ge |\tau(x,i)-\tau(x,\lambda)|=\left|\frac{\tau(x,i)}{\tau(x,1)}-\frac{\tau(x,\lambda)}{\tau(x,1)}\right|=\left|i-\frac{\tau(x,\lambda)}{\tau(x,1)}\right|.
\end{multline*}
Since $\frac{\tau(x,\lambda)}{\tau(x,1)}=\lambda$ or $\overline{\lambda}$ and $\operatorname{Im}\lambda >0$, the inequality $|i-\lambda|\ge \left|i-\frac{\tau(x,\lambda)}{\tau(x,1)}\right|$ asserts that $\frac{\tau(x,\lambda)}{\tau(x,1)}=\lambda$. Next we consider the case of 
 $\operatorname{Im}\lambda<0$. Applying \eqref{eq3-1.5} for $\mu=i$, by a similar calculation  in \eqref{eq3-2} we get
 \begin{multline*}
 |-i-\lambda|=\|T(-i)-T(\lambda)\|=\|-T(i)-T(\lambda)\|=\|-\tau(\cdot,i)-\tau(\cdot,\lambda)\| \\
 \ge\left|-\frac{\tau(x,i)}{\tau(x,1)}-\frac{\tau(x,\lambda)}{\tau(x,1)}\right|=\left|-i-\frac{\tau(x,\lambda)}{\tau(x,1)}\right|.
\end{multline*}
 As $\operatorname{Im}\lambda <0$, we have $\frac{\tau(x,\lambda)}{\tau(x,1)}=\lambda$.
 
Similarly, it can be proved that $\tau(x,\lambda)=\overline{\lambda}\tau(x,1)$ if $x\in \ch(A)_-$ and $\lambda\in \bbT$.
\end{proof}


\section{An additive Bishop's lemma}\label{sec5}
Let $R$ be a closed solid rhombus in the complex plane ${\mathbb C}$ whose four vertices are $0,\frac{\sqrt{3}}{3}\exp({\pm\frac{\pi}{6}i}), 1$. Let $\widetilde{R}$ be a  closed solid hexagon in ${\mathbb C}$ whose six vertices are $0,\frac{1}{3}\exp(\pm\frac{\pi}{3}i), \frac{\sqrt{3}}{3}\exp(\pm\frac{\pi}{6}i), 1$. 
The following lemma holds. 
\begin{lemma}\label{diamond}
The following two statements hold.
\begin{itemize}
\item[(i)] If $z\in \widetilde{R}$, then $|2z-1|\le 1$.
\item[(ii)] If $z,w\in R$, then $zw\in \widetilde{R}$.
\end{itemize}
\end{lemma}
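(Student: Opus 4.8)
The plan is to treat the two parts separately, using throughout that $R$ and $\widetilde{R}$ are compact convex sets (the convex hulls of their listed vertices), both invariant under complex conjugation and star-shaped with respect to $0$. For part~(i): since $z\mapsto 2z-1$ is affine, $2\widetilde{R}-1$ is the convex hull of the images $2v-1$ of the six vertices $v$ of $\widetilde{R}$; I would simply compute these six images, namely $-1$, $1$, $-\tfrac23\pm i\tfrac{\sqrt3}{3}$, and $\pm i\tfrac{\sqrt3}{3}$, and note that their moduli are $1,1,\tfrac{\sqrt7}{3},\tfrac{\sqrt7}{3},\tfrac{\sqrt3}{3},\tfrac{\sqrt3}{3}$, all at most $1$. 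Since $\overline{\D}$ is convex and contains all six, it contains their convex hull $2\widetilde{R}-1$; that is, $|2z-1|\le1$ for all $z\in\widetilde{R}$.

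For part~(ii), the first step is to record polar descriptions of the two regions from the equations of their edges. Writing $z=re^{i\theta}$ and using $\cos\theta+\sqrt3|\sin\theta|=2\cos(\tfrac\pi3-|\theta|)$ for $|\theta|\le\tfrac\pi2$, the bounding rays $\arg z=\pm\tfrac\pi6$ and lines $\operatorname{Re}z\pm\sqrt3\operatorname{Im}z=1$ yield
\[
R\setminus\{0\}=\bigl\{\,re^{i\theta}\ :\ |\theta|\le\tfrac\pi6,\ \ 0<r\le\tfrac1{2\cos(\pi/3-|\theta|)}\,\bigr\},
\]
and, adding the bounding lines $\operatorname{Im}z=\pm\tfrac{\sqrt3}{6}$,
\[
\widetilde{R}\setminus\{0\}=\bigl\{\,re^{i\theta}\ :\ |\theta|\le\tfrac\pi3,\ \ 0<r\le m(\theta)\,\bigr\},
\]
where $m(\theta)=\tfrac1{2\cos(\pi/3-|\theta|)}$ for $|\theta|\le\tfrac\pi6$ and $m(\theta)=\tfrac{\sqrt3}{6\sin|\theta|}$ for $\tfrac\pi6\le|\theta|\le\tfrac\pi3$, the two expressions agreeing at $|\theta|=\tfrac\pi6$ (a short computation comparing $\tan|\theta|$ with $\tfrac1{\sqrt3}$). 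Then I would make two reductions. Every $z\in R\setminus\{0\}$ equals $t\zeta$ with $t\in(0,1]$ and $\zeta=\tfrac{e^{i\theta}}{2\cos(\pi/3-|\theta|)}\in R$ a boundary point, so from $z=t_1\zeta_1$, $w=t_2\zeta_2$ we get $zw=t_1t_2\,\zeta_1\zeta_2$ with $t_1t_2\in(0,1]$; since $\widetilde{R}$ is star-shaped at $0$ it then suffices to prove $\zeta_1\zeta_2\in\widetilde{R}$ (the cases $z=0$ or $w=0$ being trivial). And since $R,\widetilde{R}$ are conjugation-invariant, we may assume $\arg(\zeta_1\zeta_2)\ge0$.

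Now put $a=|\arg\zeta_1|$, $b=|\arg\zeta_2|\in[0,\tfrac\pi6]$ and $\phi=\arg\zeta_1+\arg\zeta_2\in[0,\tfrac\pi3]$; thus $\phi=a+b$ if the two arguments have equal sign and $\phi=|a-b|\le\tfrac\pi6$ otherwise, and by the $\zeta_1\leftrightarrow\zeta_2$ symmetry we may take $a\ge b$, so $\phi=a+\eta b$ with $\eta\in\{-1,+1\}$. Since $|\arg(\zeta_1\zeta_2)|=\phi\le\tfrac\pi3$, membership $\zeta_1\zeta_2\in\widetilde{R}$ is exactly the radial inequality $\tfrac1{4\cos(\pi/3-a)\cos(\pi/3-b)}\le m(\phi)$. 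When $\phi\le\tfrac\pi6$ this reads $\cos(\tfrac\pi3-\phi)\le 2\cos(\tfrac\pi3-a)\cos(\tfrac\pi3-b)$; using $2\cos(\tfrac\pi3-b)=\cos b+\sqrt3\sin b$ and expanding $\cos(\tfrac\pi3-a-\eta b)$, it reduces (after dividing by $\sin b\ge0$) to $\eta\sin(\tfrac\pi3-a)\le\sqrt3\cos(\tfrac\pi3-a)$, which is trivial for $\eta=-1$ and is $\tan(\tfrac\pi3-a)\le\sqrt3$ for $\eta=+1$. When $\phi\ge\tfrac\pi6$ (so $\eta=+1$, $\phi=a+b$) the inequality reads $\sqrt3\sin(a+b)\le 2\cos(\tfrac\pi3-a)\cos(\tfrac\pi3-b)$; applying product-to-sum on the right turns it into $\cos(\tfrac\pi3-a-b)\le\cos(a-b)$, which is true because $0\le a-b\le\tfrac\pi3-a-b\le\pi$ (the middle inequality being equivalent to $a\le\tfrac\pi6$) and $\cos$ is decreasing on $[0,\pi]$. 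Together these give $\zeta_1\zeta_2\in\widetilde{R}$, hence $zw\in\widetilde{R}$.

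I expect the main obstacle to be part~(ii): spotting the two reductions — to the boundary of $R$ via star-shapedness, and to $\arg(zw)\ge0$ via conjugation symmetry — which replace a genuine two-variable optimization by the two clean trigonometric inequalities above, and then keeping straight the small casework around the threshold $\phi=\tfrac\pi6$, where the binding edge of $\widetilde{R}$ switches from a slanted edge to the horizontal one. Part~(i) is routine once convexity of $\widetilde{R}$ is invoked.
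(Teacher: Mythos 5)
Your proposal is correct, but for part (ii) it takes a genuinely different and much longer route than the paper. The paper's argument is a one-line convexity observation: writing $z=\sum_i\lambda_iv_i$ and $w=\sum_j\mu_jv_j$ as convex combinations of the four vertices $v_1,\dots,v_4$ of $R$, bilinearity of multiplication gives $zw=\sum_{i,j}\lambda_i\mu_j\,v_iv_j$ with $\lambda_i\mu_j\ge0$ and $\sum_{i,j}\lambda_i\mu_j=1$, so $zw$ lies in the convex hull of the pairwise products $0$, $\tfrac13$, $\tfrac{\sqrt3}{3}e^{\pm i\pi/6}$, $\tfrac13e^{\pm i\pi/3}$, $1$; these are exactly the six vertices of $\widetilde R$ together with the redundant point $\tfrac13\in[0,1]$, so that convex hull is $\widetilde R$. (The paper actually asserts that the product set \emph{equals} this convex hull, which is more than the bilinearity argument shows and more than is needed; only the inclusion matters.) You instead parametrize $R$ and $\widetilde R$ in polar coordinates, reduce to boundary points of $R$ by star-shapedness and to $\arg(\zeta_1\zeta_2)\ge0$ by conjugation symmetry, and verify two trigonometric inequalities; I checked the edge equations $\operatorname{Re}z\pm\sqrt3\operatorname{Im}z=1$ and $\operatorname{Im}z=\pm\tfrac{\sqrt3}{6}$, the formula for $m(\theta)$ with the switch at $|\theta|=\tfrac\pi6$, and both case computations (the reduction to $\eta\sin(\tfrac\pi3-a)\le\sqrt3\cos(\tfrac\pi3-a)$ and to $\cos(\tfrac\pi3-a-b)\le\cos(a-b)$), and they are all correct. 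What your route buys is complete explicitness --- it identifies which edge of $\widetilde R$ is binding for each value of $\arg(\zeta_1\zeta_2)$ --- at the cost of substantial casework that the vertex-product trick avoids entirely; it is worth internalizing that a product of convex combinations is a convex combination of products, since that is the intended one-line proof. For part (i), your verification that the six points $2v-1$ have modulus at most $1$ is precisely the computation behind the paper's ``it is trivial''.
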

\begin{proof}
It is trivial that (i) holds. 
We prove (ii). Since $R$ is the convex hull of the vertices $0,\frac{\sqrt{3}}{3}\exp({\pm\frac{\pi}{6}i})$ and  $1$, the set $\{zw: z,w\in R\}$ is the convex hull of the products of these vertices: $0, \frac13, \frac{\sqrt{3}}{3}\exp({\pm\frac{\pi}{6}i}), \frac{1}{3}\exp(\pm\frac{\pi}{3}i)$ and $1$, which coincides with $\widetilde{R}$.
\end{proof}
In the following,
\[
D=\{z\in {\mathbb C}:|z|<1\}, \quad \overline{D}=\{z\in {\mathbb C}:|z|\le 1\}.
\]
Let $x\in \ch(A)$. Put $P_x=\{f\in S(A): f(x)=1, f^{-1}(1)=|f|^{-1}(1)\}$. In fact, the set $P_x$ is the set of all peaking functions in $A$ which peaks at $x$. 
\begin{lemma}\label{tamasi}
Let $x\in \ch(A)$, $G$ an open neighborhood of $x$ in $X$ and $\delta>0$.
Then there exists $u\in A$ which satisfies the following.
\begin{itemize}
\item[(1)]
$1=u(x)=\|u\|$, $u^{-1}(1)=|u|^{-1}(1)$,
\item[(2)]
$|u|<\delta$ on $X\setminus G$,
\item[(3)]
$u(X)\subset R$.
\end{itemize}
\end{lemma}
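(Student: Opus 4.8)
The plan is to produce $u$ in the form $u=\Phi(w)$, where $w\in A$ is a peaking function bent so as to be close to $-1$ off $G$, and $\Phi$ is a fixed conformal map of the closed unit disc $\overline D$ onto $R$. The starting point is the weak peak point property: since $x\in\ch(A)$, we have $\{x\}=\bigcap_tK_t$ for peak sets $K_t$, and because $X\setminus G$ is compact and covered by the open sets $X\setminus K_t$, finitely many of the $K_t$ have intersection $K$ with $x\in K\subseteq G$; as in the proof of Lemma~\ref{preparatory}, a product of the corresponding peaking functions is a peaking function for $K$. This gives $f\in P_x$ with $f^{-1}(1)=|f|^{-1}(1)=K\subseteq G$; put $c=\max_{X\setminus G}|f|$, so $c<1$ (if $G=X$ the lemma is trivial, with $u=1$).

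Next fix, once and for all, a homeomorphism $\Phi\colon\overline D\to R$ that is holomorphic on $D$, satisfies $\Phi(\overline z)=\overline{\Phi(z)}$, and has $\Phi(1)=1$ and $\Phi(-1)=0$; such a $\Phi$ exists by the Riemann mapping theorem together with Carath\'eodory's theorem (the two ``axis vertices'' $0,1$ of $R$ correspond, after the obvious normalisation, to the points $-1,1$ of $\partial D$). We record two facts. (a) Since $R\subseteq\overline D$ and $|z|<1$ on $R\setminus\{1\}$ (an elementary computation on the convex hull defining $R$), for $\zeta\in\overline D$ one has $|\Phi(\zeta)|=1\iff\Phi(\zeta)=1\iff\zeta=1$. (b) If $g\in A$ with $\|g\|\le1$, then $\Phi(g)\in A$: $\Phi$ lies in the disc algebra and hence is a uniform limit on $\overline D$ of polynomials $p_n$, and $\|p_n(g)-p_m(g)\|\le\|p_n-p_m\|_{\overline D}$ shows $(p_n(g))$ is Cauchy in $A$, with limit $\Phi(g)$.

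Now, given $\delta$, continuity of $\Phi$ at $-1$ (with $\Phi(-1)=0$) gives $\eta>0$ such that $|\Phi(\zeta)|<\delta$ whenever $\zeta\in\overline D$ and $|\zeta+1|<\eta$. For $a\in(0,1)$ the Blaschke factor $m_a(\zeta)=\frac{\zeta-a}{1-a\zeta}$ is an automorphism of $\overline D$ fixing $\pm1$, and since $m_a(\zeta)+1=\frac{(\zeta+1)(1-a)}{1-a\zeta}$ we get $|m_a(\zeta)+1|\le\frac{2(1-a)}{1-c}$ for all $|\zeta|\le c$; choose $a$ close enough to $1$ that this is $<\eta$. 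Put $w=m_a(f)=(f-a)(1-af)^{-1}$; since $\|af\|\le a<1$, the Neumann series gives $(1-af)^{-1}\in A$, hence $w\in A$, with $\|w\|\le1$ and $w(x)=m_a(1)=1$. Set $u=\Phi(w)\in A$ by (b). Then $u(X)=\Phi(w(X))\subseteq\Phi(\overline D)=R$, proving (3); $u(x)=\Phi(1)=1$, so $\|u\|=1$; by (a), $|u(y)|=1$ forces $u(y)=1$, so $u^{-1}(1)=|u|^{-1}(1)$, proving (1); and for $y\in X\setminus G$, $|f(y)|\le c$ gives $|w(y)+1|<\eta$, whence $|u(y)|<\delta$, proving (2).

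The one step requiring a moment's thought is the model map $\Phi$ of the second paragraph: producing it with the symmetry and the normalisation $\Phi(\pm1)=1,0$, and observing that $\Phi(w)$ stays in $A$ even though the range of $w$ meets $\partial D$ at the single point $w(x)=1$ (handled by the polynomial approximation in fact (b)); one also uses here the geometric reason the rhombus $R$ was introduced, namely $|z|<1$ on $R\setminus\{1\}$. Everything else is the standard peak-set compactness argument, a Blaschke factor pushing $X\setminus G$ toward $-1$, and the continuity of $\Phi$ at $-1$.
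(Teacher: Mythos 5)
Your proposal is correct and follows essentially the same route as the paper: a conformal map of $\overline{D}$ onto $R$ normalised by $1\mapsto 1$, $-1\mapsto 0$ (the paper's $\pi_0$), precomposed with a Blaschke factor pushing the sublevel set $\{|\zeta|\le c\}$ toward $-1$ (the paper's $\pi_\delta$), applied to a peaking function that is small off $G$. The only differences are that you spell out the compactness argument for the peak set, the polynomial-approximation step, and the strict-convexity fact $|z|<1$ on $R\setminus\{1\}$, which the paper leaves as "automatic."
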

\begin{proof}
Let $\pi_0:\overline{D}\to R$ be a homeomorphism such that $\pi_0|D$ is an analytic map from $D$ onto the interior of $R$. Such a homeomorphism exists by the well known theorem of Carath\'eodory (cf. \cite{carathe}). In fact $\pi_0$ is the inverse of the homeomorphic extension of the Riemann map from the interior of $R$ onto $D$. We may assume that $\pi_0(1)=1$ and $\pi_0(-1)=0$. Put $\Omega=\pi_0^{-1}(R\cap\{z\in {\mathbb C}:|z|<\delta\})$. Choose $0<r<1$ so that $\pi_{\delta}(\{z\in {\mathbb C}:|z|<1/2\})\subset \Omega$, where $\pi_{\delta}(z)=\frac{z-r}{1-rz}$. It is possible if $|1-r|$ is sufficiently small.  Put $\pi=\pi_0\circ\pi_{\delta}$. 
Choose a peaking function $f_x\in P_x$. We may assume that 
$|f_x|< 1/2$ on $X\setminus G$. Put $u=\pi\circ f_x$. As $\pi$ is approximated by analytic polynomials on $\overline{D}$, we have $u\in A$. It is automatic that (1), (2) and (3) hold.
\end{proof}
The following is an additive Bishop's lemma.
\begin{lemma}\label{additive}
Let $f\in S(A)$, $x\in \ch(A)$. Put $f(x)=\alpha$. For any $0<r<1$ there exists $u_r\in P_x$ such that $g_{r+}=\left(\frac{\alpha}{|\alpha|}-r\alpha\right)u_r+rf$ and $g_{r-}=\left(\frac{-\alpha}{|\alpha|}-r\alpha\right)u_r+rf$ are elements in $S(A)$ with  $g_{r+}(x)=\frac{\alpha}{|\alpha|}$, $g_{r-}(x)=\frac{-\alpha}{|\alpha|}$. Here $\frac{\alpha}{|\alpha|}$ reads $1$ if $\alpha=0$.
\end{lemma}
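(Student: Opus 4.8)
The plan is to construct the peaking function $u_r$ using Lemma \ref{tamasi} and then verify directly that the two candidate functions land in $S(A)$. Fix $f\in S(A)$ and $x\in\ch(A)$, write $\alpha=f(x)$, and fix $0<r<1$. Since $\|f\|=1$, the set $G_r=\{y\in X: |f(y)|<1-r\}$ is an open neighbourhood of \emph{possibly} empty interest; the right open set to use is rather $G=\{y\in X:|f(y)-\alpha|<\varepsilon\}$ for a small $\varepsilon$ to be chosen, which is an open neighbourhood of $x$. Apply Lemma \ref{tamasi} with this $G$ and a small $\delta>0$ to obtain $u=u_r\in P_x$ with $u(x)=1=\|u\|$, $u^{-1}(1)=|u|^{-1}(1)$, $u(X)\subset R$, and $|u|<\delta$ off $G$. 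Then $g_{r+}(x)=\bigl(\tfrac{\alpha}{|\alpha|}-r\alpha\bigr)\cdot 1+r\alpha=\tfrac{\alpha}{|\alpha|}$ and likewise $g_{r-}(x)=\tfrac{-\alpha}{|\alpha|}$, so the point-value claims are immediate and the membership $g_{r\pm}\in A$ is clear; the whole content is the norm estimate $\|g_{r\pm}\|=1$.

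For the norm, note first that $\Bigl|g_{r\pm}(x)\Bigr|=1$, so it suffices to prove $|g_{r\pm}(y)|\le 1$ for all $y\in X$. I would split $X$ into three regions. \emph{Region 1: $y\notin G$.} Here $|u(y)|<\delta$, and since $\bigl|\tfrac{\pm\alpha}{|\alpha|}-r\alpha\bigr|\le 1+r<2$ and $|f(y)|\le 1$, we get $|g_{r\pm}(y)|\le 2\delta+r$, which is $<1$ once $\delta$ is chosen small relative to $1-r$. \emph{Region 2: $y\in G$ with $\alpha\neq 0$.} On $G$ we have $|f(y)-\alpha|<\varepsilon$, so $f(y)$ is close to the unimodular number $\alpha$; after multiplying everything by the unimodular constant $\overline{\alpha}$ (which changes nothing for the modulus) we are in the situation $f(y)\approx 1$. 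The algebraic identity to exploit is
\[
\overline{\tfrac{\alpha}{|\alpha|}}\,g_{r+}(y)=\bigl(1-r|\alpha|\bigr)u(y)+r\,\overline{\tfrac{\alpha}{|\alpha|}}f(y),
\]
a convex-type combination of $u(y)\in R$ and $\overline{\alpha/|\alpha|}f(y)$, which lies near $1$; since $R$ is a convex neighbourhood of $1$ within $\overline D$ (indeed $R\subset\overline D$ with $1$ on the boundary) and $\overline{\alpha/|\alpha|}f(y)$ is within $\varepsilon$ of $1$, for $\varepsilon$ small this combination stays in $\overline D$. The $g_{r-}$ case is handled by the same computation with $-\alpha/|\alpha|$ in place of $\alpha/|\alpha|$; here one uses that $u(y)\in R$ while the perturbation $r(\,\overline{-\alpha/|\alpha|}f(y)+ u(y))$... more carefully, $\overline{-\alpha/|\alpha|}\,g_{r-}(y)=(1-r|\alpha|)u(y)+r\,\overline{-\alpha/|\alpha|}f(y)$, and one checks $u(y)\in R\subset\overline D$ together with the near-$(-1)$ vector keeps the modulus $\le 1$ — this uses that $1-r|\alpha|\ge 0$ and a geometric estimate on $R$. \emph{Region 3: $y\in G$ with $\alpha=0$.} Then $g_{r\pm}(y)=u(y)+rf(y)$ with $|f(y)|<\varepsilon$ on $G$ and $u(y)\in R\subset\overline D$, so $|g_{r\pm}(y)|\le 1+r\varepsilon$; combined with Region 1 this would only give $\le 1+r\varepsilon$, so in the $\alpha=0$ case I would instead choose $G$ more cleverly or observe that $u(X)\subset R$ and $R$ together with a small disc around $0$ still fits in $\overline D$ after scaling — concretely, replace $u$ by $(1-r)$ times a peaking function, i.e.\ absorb the $r\|f\|$ slack. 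Re-examining: when $\alpha=0$, $g_{r\pm}=u_r+rf$, and we want $\|u_r+rf\|\le 1$; choosing $u_r$ with $u_r(X)\subset R$ and additionally $|u_r|\le 1-r$ off a tiny neighbourhood of $x$ while $|f|<\varepsilon$ near $x$ (not possible since $f(x)=\alpha=0$ forces $f$ small near $x$ — good, it \emph{is} automatic). So near $x$: $|u_r+rf|\le 1+r\varepsilon$; away from $x$: $|u_r+rf|\le (1-r)+r=1$. To kill the $r\varepsilon$ I would pick $u_r\in R$ with $u_r(y)$ staying in the scaled region $(1-r\varepsilon)R$ off the tiny neighbourhood where $|f|<\varepsilon$; the freedom in Lemma \ref{tamasi} (shrinking $\delta$ and $G$) makes this routine.

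The main obstacle is the geometric estimate in Region 2 ensuring the convex-type combination $(1-r|\alpha|)u(y)+r\overline{(\pm\alpha/|\alpha|)}f(y)$ stays in $\overline D$ when $f(y)$ is merely \emph{near} a unimodular point rather than equal to it. The clean way to handle this is: both $u(y)$ and $\overline{(\pm\alpha/|\alpha|)}f(y)$ lie in the closed disc of radius $1$ centred at $0$, and moreover $u(y)\in R$ which is contained in the closed disc of radius $\tfrac12$ centred at $\tfrac12$ (the disc of "$\Real z\ge |z|^2$", equivalently $|2z-1|\le 1$ — this is exactly Lemma \ref{diamond}(i), since $R\subset\widetilde R$). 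A point $z$ with $|2z-1|\le1$ satisfies $|z|\le \Real(2z)^{1/2}\cdot(\ldots)$; more usefully, $|z|^2\le\Real z$. For $w=\overline{(\alpha/|\alpha|)}f(y)$ with $|w-1|<\varepsilon$ we have $|w|^2-\Real w = |w|^2-1+(1-\Real w)\le 2\varepsilon$ (using $|w|\le 1$). Then for $v=(1-r|\alpha|)u(y)+rw$, expanding $|v|^2$ and using $|u(y)|^2\le\Real u(y)$ and $|w|^2\le\Real w+2\varepsilon$ together with the bound on the cross term $2r(1-r|\alpha|)\Real(\overline{u(y)}w)\le r(1-r|\alpha|)(|u(y)|^2+|w|^2)$ should yield $|v|^2\le \Real v + O(\varepsilon)$, which is $\le 1+O(\varepsilon)$ since $\Real v\le |v|\le\ldots$ — to close the loop cleanly one wants $|v|^2\le 1$ exactly, so the $O(\varepsilon)$ must be absorbed, e.g.\ by shrinking the region $R$ to $(1-\varepsilon)R$ in Lemma \ref{tamasi} (again available from the freedom in that lemma) or by a slightly sharper convexity bound. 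I expect that with the right choice of $\varepsilon$ depending on $r$, and using the specific geometry of $R$ and $\widetilde R$ recorded in Lemma \ref{diamond}, this estimate goes through; this balancing of $\varepsilon$, $\delta$, and the shrinkage of $R$ against the slack $1-r$ is where the real work lies.
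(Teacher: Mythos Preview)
Your approach has a genuine gap that cannot be closed by adjusting $\varepsilon$ and $\delta$. The first concrete error is in Region~2: after rotating by $\overline{\alpha/|\alpha|}$, the point $w=\overline{\alpha/|\alpha|}\,f(y)$ satisfies $|w-|\alpha||<\varepsilon$, \emph{not} $|w-1|<\varepsilon$, because $\overline{\alpha/|\alpha|}\,\alpha=|\alpha|$. Consequently the expression $(1-r|\alpha|)u(y)+rw$ is not a sub-convex combination of two points of $\overline D$ when $|\alpha|<1$: the coefficients $(1-r|\alpha|)$ and $r$ sum to $1+r(1-|\alpha|)>1$. The honest estimate on $G$ is
\[
|g_{r+}(y)|\le (1-r|\alpha|)|u(y)|+r|\alpha|+r|f(y)-\alpha|\le (1-r|\alpha|)\cdot 1+r|\alpha|+r\varepsilon=1+r\varepsilon,
\]
and the same overshoot appears for $g_{r-}$ (where, incidentally, your displayed coefficient should be $1+r|\alpha|$, not $1-r|\alpha|$). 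So the question is whether the $r\varepsilon$ can be absorbed.

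It cannot, with a single peaking function. Your proposed fix of replacing $R$ by $(1-\varepsilon)R$ is incompatible with $u\in P_x$: we must have $u(x)=1$, and $1\notin(1-\varepsilon)R$. More fundamentally, the obstruction is this: to get $|g_{r+}(y)|\le 1$ you need $(1-r|\alpha|)|u(y)|+r|\alpha|+r|f(y)-\alpha|\le 1$, i.e.\ $|u(y)|\le 1-\dfrac{r|f(y)-\alpha|}{1-r|\alpha|}$. As $y\to x$ both sides tend to $1$, and you need the inequality to hold at \emph{every} level of $|f(y)-\alpha|$, not just at the two thresholds ``$\ge\varepsilon$'' (where $|u|<\delta$) and ``$<\varepsilon$'' (where you only know $|u|\le 1$). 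A single application of Lemma~\ref{tamasi} gives you exactly one threshold, which is not enough.

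The paper's proof handles this by a layered construction: it decomposes $\{y:|rf(y)-r\alpha|>0\}$ into dyadic shells $F_n=\{y:\varepsilon/2^{n+2}\le |rf(y)-r\alpha|\le \varepsilon/2^{n+1}\}$, chooses a separate peaking function $u_n$ that is small on $F_n$, and sets $u_r=u_0\sum_{n\ge 1}u_n/2^n$. On $F_n$ one then has $|u_r(y)|\le 1-\tfrac{1}{2^n}+\tfrac{1}{2^{2n+1}}$, which is just small enough to compensate the $\varepsilon/2^{n+1}$ remainder. The product form $u_0\cdot(\sum u_n/2^n)$ is also what places $u_r(X)$ inside $\widetilde R$ via Lemma~\ref{diamond}(ii), giving $\|1-2u_r\|\le 1$; this identity is the device that tames $g_{r-}$. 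Your single-$u$ scheme recovers $u(X)\subset R\subset\widetilde R$ and hence $|1-2u|\le 1$, but without the adaptive smallness on each shell the $g_{r-}$ estimate still overshoots by $r\varepsilon$ for the same reason as above. The infinite-sum construction is the missing idea.
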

\begin{proof}
Let $0<r<1$ and $0<\varepsilon<1-r|\alpha|$. Put
\[
F_0=\{y\in X:|r\alpha -rf(y)|\ge \varepsilon/4\}.
\]
For a positive integer $n$ put
\[
F_n=\{y\in X:\varepsilon/2^{n+2}\le |r\alpha-rf(y)|\le \varepsilon/2^{n+1}\}.
\]
Then by Lemma \ref{tamasi} there exists $u_0\in P_x$ such that $u_0(X)\subset R$ and $|u_0|<\frac{1-r}{1+r|\alpha|}$ on $F_0$. 
For every positive integer $n$ there exists $u_n\in P_x$ such that $u_n(X)\subset R$ and $|u_n|<\frac{1}{2^{n+1}}$ on $F_n$. 
Put $u_r=u_0\sum_{n=1}^\infty\frac{u_n}{2^n}$. Note that the sum uniformly converges since $\|u_n\|=1$. Then $u_r$ is the desired function. We prove this. 

As $u_n(X)\subset R$ for all positive integer $n$ and $R$ is a convex set containing $0$, we have that  $\left(\sum\frac{u_n}{2^n}\right)(X)\subset R$. Since $u_0(X)\subset R$, we obtain that $u_r(X)\subset \widetilde{R}$ and $\|1-2u_r\|\le 1$ by Lemma \ref{diamond}. 
We prove $|g_{r+}(y)|\le 1$ and $|g_{r-}(y)|\le 1$ for any $y\in X$ in three cases: 1) $y\in F_0$; 2) $y\in F_n$ for some positive integer $n$; 3) $y\in X\setminus (U_{n=1}^\infty F_n)\cup F_0$. 

1) Suppose that $y\in F_0$. We have $|u_r(y)|\le|u_0(y)|<\frac{1-r}{1+r|\alpha|}$. 
As $\left|\frac{\alpha}{|\alpha|}-r\alpha\right|=1-r|\alpha|$ and $\left|\frac{-\alpha}{|\alpha|}-r\alpha\right|=1+r|\alpha|$, we see that
\[
|g_{r+}(y)|\le (1-r|\alpha|)\frac{1-r}{1+r|\alpha|}+r|f(y)|<1,
\]
\[
|g_{r-}(y)|\le (1+r|\alpha|)\frac{1-r}{1+r|\alpha|}+r|f(y)|\le 1
\]
since $\|f\|=1$.

2) Suppose that $y\in F_n$ for some positive integer $n$. Then 
\begin{multline*}
|u_r(y)|\le \sum_{m=1}^\infty\frac{|u_m(y)|}{2^m}=\sum_{m\ne n}\frac{|u_m(y)|}{2^m}+\frac{|u_n(y)|}{2^n} \\
\le
\sum_{m\ne n}\frac{1}{2^m}+\frac{1}{2^n2^{n+1}}=1-\frac{1}{2^n}+\frac{1}{2^n2^{n+1}}<1.
\end{multline*}
Then
\begin{align*}
|g_{r+}(y)|
&\le \left|\left(\frac{\alpha}{|\alpha|}-r\alpha\right)u_r(y)+r\alpha\right|+|rf(y)-r\alpha| \\
&\le(1-r|\alpha|)|u_r(y)|+r|\alpha|+\frac{\varepsilon}{2^{n+1}} \\
&\le(1-r|\alpha|)\left(1-\frac{1}{2^n}+\frac{1}{2^n2^{n+1}}\right)+r|\alpha|+\frac{\varepsilon}{2^{n+1}} 
\end{align*}
as $0<\varepsilon<1-r|\alpha|$
\begin{align*}
\hspace{7mm}
&\le (1-r|\alpha|)|\left(1-\frac{1}{2^n}+\frac{1}{2^n2^{n+1}}+\frac{1}{2^{n+1}}\right)+r|\alpha| \\
&<(1-r|\alpha|)+r|\alpha|=1.
\end{align*}
We also have
\begin{align*}
|g_{r-}(y)|&\le
\left|\left(\frac{-\alpha}{|\alpha|}+r\alpha\right)u_r(y)-2r\alpha u_r(y)+r\alpha\right|+|rf(y)-r\alpha| \\
&\le \left|\frac{-\alpha}{|\alpha|}+r\alpha\right||u_r(y)|+r|\alpha||1-2u_r(y)|+\frac{\varepsilon}{2^{n+1}} \\
\end{align*}
as $\|1-2u_r\|\le 1$ and $0<\varepsilon<1-r|\alpha|$ we have
\begin{align*}
\hspace{7mm}&
\le(1-r|\alpha|)\left(1-\frac{1}{2^n}+\frac{1}{2^n2^{n+1}}+\frac{1}{2^{n+1}}\right)+r|\alpha|<1.
\end{align*}

3) Suppose that 
$y\in X\setminus (U_{n=1}^\infty F_n)\cup F_0$. 
In this case $rf(y)=r\alpha$. 
We have
\[
|g_{r+}(y)|\le(1-r|\alpha|)|u_r(y)|+r|\alpha|\le 1
\]
since $\|u_r\|=1$.  We also have
\begin{align*}
|g_{r-}(y)|&=\left|\left(\frac{-\alpha}{|\alpha|}+r\alpha\right)u_r(y)-2r\alpha u_r(y)+r\alpha\right| \\
&\le(1-r|\alpha|)|u_r(y)|+r|\alpha||2u_r(y)-1|\le 1.
\end{align*}
As $g_{r+}(x)=\frac{\alpha}{|\alpha|}$ and $g_{r-}(x)=\frac{-\alpha}{|\alpha|}$ since $u_r(x)=1$,
we observe by 1), 2) and 3) that $\|g_{r+}\|=\|g_{r-}\|=1$, hence $g_{r+},g_{r-}\in S(A)$.

\end{proof}

\section{A form of $Tf$ on the Choquet boundary}\label{sec6}

\begin{definition}
Let $x\in \ch (A)$ and $\alpha\in \overline{D}$.
A subset $M_{x,\alpha}$ of $S(A)$ is defined as
\[
M_{x,\alpha}=\{f\in S(A): d(f, F_{x,\frac{\alpha}{|\alpha|}})=1-|\alpha|,\,\,d(f,F_{x,\frac{-\alpha}{|\alpha|}})=1+|\alpha|\},
\]
where $d(f, F)=\inf\{\|f-g\| : g\in F\}$ and $\frac{\alpha}{|\alpha|}$ reads $1$ if $\alpha=0$.
\end{definition}
The set $M_{x,\alpha}$ corresponds to  $E_{\lambda}$ appeared in \cite{moriozawa}.
\begin{lemma}\label{Tmxa}
For every pair $x\in \ch(A)$ and $\lambda\in \bbT$,  the equality 
\[
d(f, F_{x,\lambda})=d(T(f),F_{\phi(x),\tau(x,\lambda)}).
\]
holds for every $f\in S(A)$.
\end{lemma}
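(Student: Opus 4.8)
The plan is to deduce the identity directly from the isometric nature of $T$ together with Lemma \ref{mapsbtmcs}. First I would unwind the definition $d(f,F_{x,\lambda})=\inf\{\|f-g\|:g\in F_{x,\lambda}\}$ and observe that the infimum runs over $g\in F_{x,\lambda}\subset S(A)$, while $f\in S(A)$ as well; hence every norm $\|f-g\|$ occurring in this infimum is the distance between two points of the unit sphere. This is the only point that requires a moment's care, since $T$ is defined only on $S(A)$ and not on all of $A$. Once this is noted, the hypothesis that $T:S(A)\to S(B)$ is a surjective isometry gives $\|f-g\|=\|T(f)-T(g)\|$ for each such $g$.

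Next I would invoke Lemma \ref{mapsbtmcs}, which yields $T(F_{x,\lambda})=F_{\phi(x),\tau(x,\lambda)}$. Since $T$ is a bijection of $S(A)$ onto $S(B)$, the assignment $g\mapsto T(g)$ restricts to a bijection of $F_{x,\lambda}$ onto $F_{\phi(x),\tau(x,\lambda)}$, so the substitution $h=T(g)$ turns the infimum over $F_{x,\lambda}$ into the infimum over $F_{\phi(x),\tau(x,\lambda)}$. Concretely,
\[
d(f,F_{x,\lambda})=\inf_{g\in F_{x,\lambda}}\|f-g\|=\inf_{g\in F_{x,\lambda}}\|T(f)-T(g)\|=\inf_{h\in F_{\phi(x),\tau(x,\lambda)}}\|T(f)-h\|=d\bigl(T(f),F_{\phi(x),\tau(x,\lambda)}\bigr),
\]
which is exactly the claimed equality.

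Thus there is essentially no genuine obstacle: the argument is a formal change of variable built on the bijectivity of $T$, its isometry property, and the already-established correspondence of maximal convex sets from Lemma \ref{mapsbtmcs}. The one subtlety worth flagging in the write-up is, as noted, that $T$ is a priori only a map on the unit sphere, so one should verify that all points entering the infimum lie on $S(A)$ before applying the isometry identity; this holds because $F_{x,\lambda}\cup\{f\}\subset S(A)$.
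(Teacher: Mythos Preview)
Your proof is correct and follows essentially the same route as the paper: use that $T$ is an isometry on $S(A)$ to rewrite each $\|f-g\|$ as $\|T(f)-T(g)\|$, then invoke Lemma~\ref{mapsbtmcs} to identify $T(F_{x,\lambda})$ with $F_{\phi(x),\tau(x,\lambda)}$. The paper's version is slightly terser but the argument is the same; your explicit remark that all points involved lie in $S(A)$ is a harmless clarification.
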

\begin{proof}
As $T$ is an isometry we have $d(f,g)=d(T(f),T(g))$ for every $g\in F_{x,\lambda}$.
Thus $d(f, F_{x,\lambda})=d(T(f),T(F_{x,\lambda}))$. By Lemma \ref{mapsbtmcs} we have
$d(f, F_{x,\lambda})=d(T(f),F_{\phi(x),\tau(x,\lambda)})$.
\end{proof}
\begin{lemma}\label{mxa}
For every pair $x\in \ch(A)$ and $\alpha\in \overline{D}$, we have
$\mxa=\{f\in S(A):f(x)=\alpha\}$.
\end{lemma}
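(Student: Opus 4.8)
\textbf{Proof proposal for Lemma \ref{mxa}.}

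The plan is to prove the two inclusions $\{f\in S(A):f(x)=\alpha\}\subset M_{x,\alpha}$ and $M_{x,\alpha}\subset\{f\in S(A):f(x)=\alpha\}$ separately, relying on the additive Bishop's lemma (Lemma \ref{additive}) for the first and a direct distance estimate for the second. Throughout write $\beta=\alpha/|\alpha|$ (read as $1$ when $\alpha=0$), so that $F_{x,\beta}$ and $F_{x,-\beta}$ are the two maximal convex sets attached to $x$ appearing in the definition of $M_{x,\alpha}$.

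For the inclusion $\subset$, let $f\in S(A)$ with $f(x)=\alpha$. Given $0<r<1$, Lemma \ref{additive} produces $u_r\in P_x$ with $g_{r+}=(\beta-r\alpha)u_r+rf\in S(A)$ and $g_{r+}(x)=\beta$, hence $g_{r+}\in F_{x,\beta}$; then $\|f-g_{r+}\|=\|f-(\beta-r\alpha)u_r-rf\|=\|(1-r)f-(\beta-r\alpha)u_r\|\le(1-r)+|\beta-r\alpha|=(1-r)+(1-r|\alpha|)$, which I would want to sharpen. The cleaner route: $f-g_{r+}=(1-r)f-(\beta-r\alpha)u_r$, and evaluating the relevant norm via the triangle inequality against the fixed element $\beta u_r\in F_{x,\beta}$ (note $\beta u_r(x)=\beta$) gives $\|f-\beta u_r\|\le\|f\|+\|u_r\|$, too crude as well; so instead I would estimate $d(f,F_{x,\beta})\le\|f-g_{r+}\|$ and push $r\to 1$. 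More carefully, $g_{r+}-f=(\beta-r\alpha)u_r-(1-r)f$, and since $|\beta-r\alpha|=1-r|\alpha|$ while $|u_r|\le 1$ and $|f|\le 1$, letting $r\to 1^-$ shows $g_{r+}\to$ something of norm tending to force $d(f,F_{x,\beta})\le 1-|\alpha|$; the reverse bound $d(f,F_{x,\beta})\ge|f(x)-\beta|=|\alpha-\beta|=1-|\alpha|$ is immediate from point-evaluation at $x$. The same argument with $g_{r-}$ gives $d(f,F_{x,-\beta})\le 1+|\alpha|$, and the trivial lower bound is $\|f-h\|\ge|\alpha-(-\beta)|=1+|\alpha|$ for $h\in F_{x,-\beta}$. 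Hence $f\in M_{x,\alpha}$.

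For the inclusion $\supset$, let $f\in M_{x,\alpha}$ and write $f(x)=\gamma$. For any $h\in F_{x,\beta}$ we have $|f(x)-h(x)|=|\gamma-\beta|$, so $d(f,F_{x,\beta})\ge|\gamma-\beta|$; the hypothesis forces $|\gamma-\beta|\le 1-|\alpha|$. Likewise from the second condition $|\gamma+\beta|\le 1+|\alpha|$. Since $|\beta|=1$ and $|\gamma|\le 1$, the first inequality $|\gamma-\beta|\le 1-|\alpha|$ already confines $\gamma$ to a small closed disk; geometrically the only point of $\overline D$ at distance $\le 1-|\alpha|$ from $\beta$ that simultaneously has distance $\le 1+|\alpha|$ from $-\beta$ is $\gamma=\alpha$. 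Concretely, writing $\gamma=\beta s$ for the component along $\beta$, $|\gamma-\beta|\le 1-|\alpha|$ forces $\Real s\ge$ a bound that pins $|\gamma|\ge|\alpha|$ and aligns $\gamma$ with $\beta$; combined with $|\gamma|\le 1$ this yields $\gamma=\alpha$. So $f(x)=\alpha$.

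The main obstacle is the first inclusion: extracting from the family $\{g_{r\pm}\}_{0<r<1}$ the exact distance values $1-|\alpha|$ and $1+|\alpha|$ rather than merely upper bounds close to them. I expect one must compute $\|f-g_{r+}\|$ precisely — using that $g_{r+}-f=(\beta-r\alpha)u_r-(1-r)f$ and that $|\beta-r\alpha|=1-r|\alpha|$ — and verify a uniform bound $\|f-g_{r+}\|\le 1-|\alpha|+o(1)$ as $r\to 1$, so that the infimum over $r$ (hence over $F_{x,\beta}$) is exactly $1-|\alpha|$, matching the point-evaluation lower bound; the estimate for $g_{r-}$ is analogous and if anything easier since there one only needs $\le 1+|\alpha|$, which is the trivial upper bound for the distance to any nonempty subset of $S(A)$ from a point of $S(A)$ anyway once one checks $g_{r-}\in F_{x,-\beta}$. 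The geometry in the second inclusion is elementary planar trigonometry and should be dispatched quickly.
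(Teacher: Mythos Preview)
Your approach is the same as the paper's: use the approximants $g_{r\pm}$ from Lemma~\ref{additive} and let $r\to 1$ for the upper bounds on $d(f,F_{x,\pm\beta})$, use point evaluation at $x$ for the matching lower bounds, and for the reverse inclusion deduce $\gamma=\alpha$ from the two inequalities $|\gamma-\beta|\le 1-|\alpha|$ and $|\gamma+\beta|\le 1+|\alpha|$. (The paper leaves that last step unelaborated too; the clean way is the triangle inequality $2=|(\gamma+\beta)+(\beta-\gamma)|\le (1+|\alpha|)+(1-|\alpha|)=2$, which forces equality throughout and hence $\gamma=|\alpha|\beta=\alpha$.)

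Two small corrections to your write-up. First, there is no ``main obstacle'': your own estimate $\|f-g_{r+}\|\le (1-r)+(1-r|\alpha|)$ already tends to $1-|\alpha|$ as $r\to 1$, so nothing needs to be sharpened --- this is precisely the bound the paper uses. Second, your parenthetical that $1+|\alpha|$ is ``the trivial upper bound for the distance to any nonempty subset of $S(A)$ from a point of $S(A)$'' is wrong; the trivial bound is $2$, not $1+|\alpha|$, so you genuinely do need the analogous computation $\|f-g_{r-}\|\le (1-r)+(1+r|\alpha|)\to 1+|\alpha|$.
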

\begin{proof}
Suppose that $f\in \mxa$. For every $h\in \fxa$ we have
\[
\left|f(x)-\frac{\alpha}{|\alpha|}\right|=|f(x)-h(x)|\le \|f-h\|.
\]
Hence
\begin{equation}\label{mxa1}
\left|f(x)-\frac{\alpha}{|\alpha|}\right|\le d(f,\fxa)=1-|\alpha|.
\end{equation}
For every $g\in \fxma$ we have
\[
\left|f(x)-\frac{-\alpha}{|\alpha|}\right|=|f(x)-g(x)|\le \|f-g\|.
\]
Hence
\begin{equation}\label{mxa2}
\left|f(x)-\frac{-\alpha}{|\alpha|}\right|\le d(f,\fxma)=1+|\alpha|.
\end{equation}
By \eqref{mxa1} and \eqref{mxa2} we see that $f(x)=\alpha$.

Suppose that $f\in S(A)$ and $f(x)=\alpha$. 
For any $0<r<1$, 
the functions $g_{r+}$ and $g_{r-}$, defined in Lemma \ref{additive}, 
belong to $\fxa$ and $\fxma$, respectively.
We have
\[
\|g_{r+}-f\|\le (1-r|\alpha|)\|u_r\|+\|rf-f\|=(1-r|\alpha|)+1-r
\]
and
\[
\|g_{r-}-f\|\le (1+r|\alpha|)\|u_r\|+\|rf-f\|=(1+r|\alpha|)+1-r.
\]
As $r$ is an arbitrary number of the open interval $(0,1)$, we have $d(f, \fxa)\le 1-|\alpha|$ and $d(f,\fxma)\le 1+|\alpha|$. 
On the other hand, 
\[
1-|\alpha|=\left|\alpha- \frac{\alpha}{|\alpha|}\right|=|f(x)-h(x)|\le \|f-h\|
\]
for any $h\in \fxa$ and 
\[
1+|\alpha|=\left|\alpha-\frac{-\alpha}{|\alpha|}\right|=|f(x)-h'(x)|\le \|f-h'\|
\]
for any $h'\in \fxma$. It follows that  $d(f,\fxa)=1-|\alpha|$ and $d(f,\fxma)=1+|\alpha|$. 
Hence we get $f\in M_{x,\alpha}$.

\end{proof}
\begin{lemma}\label{Tf}
For every $f\in S(A)$ we have
\begin{equation*}
T(f)(\phi(x))=\tau(x,1)\times
\begin{cases}
f(x),\qquad x\in \ch(A)_+ \\
\overline{f(x)}, \qquad x\in \ch(A)_-.
\end{cases}
\end{equation*}
\end{lemma}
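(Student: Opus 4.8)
The plan is to read off $T(f)(\phi(x))$ from the distance data encoded by Lemma~\ref{mxa}, transport that data through $T$ by Lemma~\ref{Tmxa}, and simplify the resulting unimodular parameters via Lemma~\ref{txl}. Concretely, fix $f\in S(A)$ and $x\in\ch(A)$, and put $\alpha=f(x)$, so that $\alpha\in\overline{D}$. By Lemma~\ref{mxa} we have $f\in M_{x,\alpha}$, i.e.
\[
d(f,F_{x,\frac{\alpha}{|\alpha|}})=1-|\alpha|,\qquad d(f,F_{x,\frac{-\alpha}{|\alpha|}})=1+|\alpha|,
\]
with the convention $\frac{\alpha}{|\alpha|}=1$ when $\alpha=0$. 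Since $\pm\frac{\alpha}{|\alpha|}\in\bbT$, Lemma~\ref{Tmxa} transfers these identities to $T(f)$, yielding $d(T(f),F_{\phi(x),\tau(x,\frac{\alpha}{|\alpha|})})=1-|\alpha|$ and $d(T(f),F_{\phi(x),\tau(x,\frac{-\alpha}{|\alpha|})})=1+|\alpha|$.

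Next I would evaluate $\tau(x,\pm\frac{\alpha}{|\alpha|})$ using Lemma~\ref{txl}: if $x\in\ch(A)_+$ these equal $\pm\frac{\alpha}{|\alpha|}\tau(x,1)$, and if $x\in\ch(A)_-$ they equal $\pm\frac{\overline{\alpha}}{|\alpha|}\tau(x,1)$. Assuming first that $\alpha\neq0$, set $\beta=\alpha\tau(x,1)$ in the first case and $\beta=\overline{\alpha}\tau(x,1)$ in the second; then $\beta\in\overline{D}$, $|\beta|=|\alpha|\in(0,1]$, the number $\frac{\beta}{|\beta|}$ is precisely the unimodular parameter in the first transferred identity, and $-\frac{\beta}{|\beta|}$ is the one in the second. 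Hence $d(T(f),F_{\phi(x),\frac{\beta}{|\beta|}})=1-|\beta|$ and $d(T(f),F_{\phi(x),-\frac{\beta}{|\beta|}})=1+|\beta|$, which says exactly that $T(f)\in M_{\phi(x),\beta}$ for the uniform algebra $B$. Applying Lemma~\ref{mxa} to $B$ then gives $T(f)(\phi(x))=\beta$, which is $\tau(x,1)f(x)$ when $x\in\ch(A)_+$ and $\tau(x,1)\overline{f(x)}$ when $x\in\ch(A)_-$, as claimed.

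It remains to dispose of the case $\alpha=f(x)=0$, where the convention $\frac{\alpha}{|\alpha|}=1$ makes the direction recorded in $M_{x,0}$ disagree with the direction $\tau(x,1)$ appearing after the transfer, so Lemma~\ref{mxa} for $B$ cannot be quoted verbatim. Here the transferred identities read $d(T(f),F_{\phi(x),\tau(x,1)})=1$ and $d(T(f),F_{\phi(x),-\tau(x,1)})=1$, using $\tau(x,-1)=-\tau(x,1)$ from Lemma~\ref{txl}. Writing $\eta=T(f)(\phi(x))$ and choosing $h_\pm\in F_{\phi(x),\pm\tau(x,1)}$, the bound $|\eta\mp\tau(x,1)|=|T(f)(\phi(x))-h_\pm(\phi(x))|\le\|T(f)-h_\pm\|$ gives $|\eta-\tau(x,1)|\le1$ and $|\eta+\tau(x,1)|\le1$; the parallelogram law then forces $2|\eta|^2+2\le2$, whence $\eta=0=\tau(x,1)f(x)$.

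I do not anticipate a genuine obstacle here: the statement is essentially an assembly of Lemmas~\ref{mxa}, \ref{Tmxa}, and~\ref{txl}, and the only point demanding care is the bookkeeping around the convention ``$\frac{\alpha}{|\alpha|}$ reads $1$ if $\alpha=0$'', which is precisely what necessitates the short separate treatment of the case $\alpha=0$.
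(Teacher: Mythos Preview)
Your proof is correct and follows the same route as the paper: show $f\in M_{x,\alpha}$ via Lemma~\ref{mxa}, push the two distances through $T$ via Lemma~\ref{Tmxa}, rewrite the resulting unimodular parameters with Lemma~\ref{txl}, and then read off $T(f)(\phi(x))$ from Lemma~\ref{mxa} applied in $B$. The only difference is that you handle $\alpha=0$ by a separate parallelogram-law argument, whereas the paper folds this case into the same chain of equalities; your extra care is warranted, since after the transfer the direction is $\tau(x,1)$ rather than the conventional $1$, so the literal identification of the transferred set with $M_{\phi(x),0}$ requires exactly the observation you supply.
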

\begin{proof}
Let $x\in \ch(A)$ and $\alpha\in \overline{D}$.
As $T(S(A))=S(B)$ we have by Lemma \ref{Tmxa} that
\begin{align*}
&T(\mxa)=\{g\in S(B):d(g,F_{\phi(x),\tau(x,\frac{\alpha}{|\alpha|})})=1-|\alpha|,\,\,d(g,F_{\phi(x),\tau(x,\frac{-\alpha}{|\alpha|})})=1+|\alpha|\}, \\
&\text{then by Lemma \ref{txl}}\\
&=
\begin{cases}
\{g:d(g,\Tfxa)=1-|\alpha|,\,\,d(g,\Tfxma)=1+|\alpha|\},\quad x\in \ch(A)_+\\
\{g:d(g,\Tfxba)=1-|\alpha|,\,\,d(g,\Tfxmba)=1+|\alpha|\},\quad x\in \ch(A)_-
\end{cases}\\
&=
\begin{cases}
M_{\phi(x),\alpha\tau(x,1)},\qquad x\in \ch(A)_+ \\
M_{\phi(x),\overline{\alpha}\tau(x,1)},\qquad x\in \ch(A)_-,
\end{cases}\\
&\text{applying Lemma \ref{mxa} for $M_{\phi(x),\beta}\subset S(B)$ for $\beta=\alpha\tau(x,1),\,\,\overline{\alpha}\tau(x,1)$ we have}\\
&=
\begin{cases}
\{g\in S(B):g(\phi(x))=\alpha\tau(x,1)\},\qquad x\in \ch(A)_+ \\
\{g\in S(B):g(\phi(x))=\overline{\alpha}\tau(x,1)\},\qquad x\in \ch(A)_-.
\end{cases}
\end{align*}
Let $f\in S(A)$. Put $\alpha=f(x)$. Then $f\in M_{x,\alpha}$ by Lemma \ref{mxa}. By the above we have $Tf(\phi(x))=\alpha\tau(x,1)$ if $x\in \ch(A)_+$ and 
$Tf(\phi(x))=\overline{\alpha}\tau(x,1)$ if $x\in \ch(A)_-$. As $f(x)=\alpha$ we have the conclusion.
\end{proof}
\section{Proof of Theorem \ref{main}}\label{sec7}
\begin{proof}[Proof of Theorem \ref{main}]
By Lemma \ref{mapsbtmcs} the map $\phi:\ch(A)\to \ch(B)$ is a bijection. Denote the inverse of $\phi$ by $\psi:\ch(B)\to \ch(A)$.
Then by Lemma \ref{Tf} we have
\begin{equation}\label{primitiveform}
T(f)(y)=\tau(\psi(y),1)\times
\begin{cases}
f\circ\psi(y),\qquad y\in \phi(\ch(A)_+) \\
\overline{f\circ\psi(y)}, \qquad y\in \phi(\ch(A)_-)
\end{cases}
\end{equation}
for every $f\in S(A)$. Note that since $\phi:\ch(A)\to \ch(B)$ is a bijection we have $\phi(\ch(A)_+)\cap\phi(\ch(A)_-)=\emptyset$ and 
 $\phi(\ch(A)_+)\cup\phi(\ch(A)_-)=\ch(B)$.
 
 We prove that $T(1)$ is invertible and $|T(1)|=1$ on $M_B$. Since $T$ is a surjection, there exists $f_1\in S(A)$ with $T(f_1)=1$.
 By \eqref{primitiveform} we have
 \begin{equation}\label{7}
 T(1)(y)=\tau(\psi(y),1),\qquad y\in \ch(B),
 \end{equation}
 and
 \begin{equation}\label{8}
1= T(f_1)(y)=\tau(\psi(y),1)\times
 \begin{cases}
f_1\circ\psi(y),\qquad y\in \phi(\ch(A)_+) \\
\overline{f_1\circ\psi(y)}, \qquad y\in \phi(\ch(A)_-),
\end{cases}
\end{equation}
and 
\begin{equation}\label{9}
T(f_1^2)(y)=\tau(\psi(y),1)\times
 \begin{cases}
(f_1)^2\circ\psi(y),\qquad y\in \phi(\ch(A)_+) \\
\overline{(f_1)^2\circ\psi(y)}, \qquad y\in \phi(\ch(A)_-),
\end{cases}
\end{equation}
Combining \eqref{7}, \eqref{8} and \eqref{9} we get
\[
\text{$1=(T(f_1))^2=T(1)T(f_1^2)$ on $\ch(B)$}.
\]
 Since $\ch(B)$ is a uniqueness set for $B$, we obtain
 \[
 \text{$1=T(1)T(f_1^2)$ on $M_B$}.
 \]
 Thus $T(1)$ is invertible in $B$. On the other hand, $|T(1)|=1$ on $\ch(B)$ induces that $|(T(1))^{-1}|=1$ on $\ch(B)$. Thus 
 $|T(1)|\le 1$ and $|(T(1))^{-1}|\le 1$ on $M_B$. It follows that $|T(1)|=1$ on $M_B$. 
 
 Define $T_1:A\to B$ by 
 \begin{equation*}
 T_1(f) =\overline{T(1)}\times
 \begin{cases}
 0, \qquad f=0 \\
 \|f\|T\left(\frac{f}{\|f\|}\right),\quad f\ne 0.
 \end{cases}
 \end{equation*}
 By a simple calculation $T_1$ is a bijection since $T$ is. It is easy to see that $T_1$ is an extension of $\overline{T(1)}T$, hence $T(1)T_1$ is an extension of $T$.
 By \eqref{primitiveform} and \eqref{7} we infer that 
 \begin{equation*}
 T_1(f)=
 \begin{cases}
 \text{$f\circ\psi$ on $\phi(\ch(A)_+)$} \\
 \text{$\overline{f\circ\psi}$ on $\phi(\ch(A)_-)$}.
 \end{cases}
 \end{equation*}
 As $\ch(B)$ is a uniqueness set for $B$, we infer that $T_1$ is a real-linear algebra-isomorphism from $A$ onto $B$. 
 Then by \cite[Theorem 2.1]{hm} there exist a homeomorphism $\Psi:M_B\to M_A$, possibly empty disjoint closed and open subsets 
 $M_{B+}$ and $M_{B-}$ of $M_B$ with $M_{B+}\cup M_{B-}=M_B$ such that
 \begin{equation*}
 T_1(f)=
 \begin{cases}
 \text{$f\circ\Psi$ on $M_{B+}$} \\
 \text{$\overline{f\circ\Psi}$ on $M_{B-}$}
 \end{cases}
 \end{equation*}
 for every $f\in A$. It follows that 
 \begin{equation*}
T(1) T_1(f)=T(1)\times
 \begin{cases}
 \text{$f\circ\Psi$ on $M_{B+}$} \\
 \text{$\overline{f\circ\Psi}$ on $M_{B-}$}
 \end{cases}
 \end{equation*}
 for every $f\in A$ is a surjective real-linear isometry from $A$ onto $B$ since $|T(1)|=1$ on $M_B$. As is already pointed out that 
 $T(1)T_1$ is an extension of $T$ we conclude the proof.

\end{proof}
\section{Remarks}\label{sec8}
We close the paper with a few remarks. As the first one we conjecture that a uniform algebra satisfies the Mazur-Ulam property. A surjective isometry between the unit spheres of uniform algebras is represented by a ``weighted composition operator'', because a surjective real-linear isometry between uniform algebras is represented as in this form. In fact, the point of the proof of Theorem \ref{main} is to show the form of the given isometry between the unit spheres.  In general, a surjective real-linear isometry between a uniform algebra and a Banach space of continuous functions is not expected to have the form of a ``weighted composition operator''. We do not know how to prove the conjecture.

The second remark concerns Tingley's problem on a Banach space of analytic functions. As we have already pointed out that Theorem \ref{main} provides the first positive solution to Tingley's problem on a Banach space of analytic functions. It is interesting to study Tingley's problem on several Banach spaces of analytic functions. The form of a surjective complex-linear isometry on the Hardy space $H^1(D)$ on the open unit disk is derived by a theorem of deLeeuw, Rudin and Wermer \cite{drw} (cf. \cite{nagasawa}) on the isometries between uniform algebras. The point of  the proof is to show that an isometry $U:H^1(D)\to H^1(D)$ essentially preserves the range of functions. It reminds us that if we can prove that the range of the function is essentially preserved by the isometry $T:S(H^1(D))\to S(H^1(D))$, we have a chance to solve Tingley's problem on $H^1(D)$.

As a final remark we encourage research on Tingley's problem on several Banach algebras of continuous functions. Comparing with the theorem of Wang \cite{wang1996a} on the Banach algebra of $C^{(n)}(X)$, it is interesting to study a surjective isometry on the unit sphere of a Banach space or algebra of Lipschitz functions, it has already been pointed out by Cueto-Avellaneda \cite[Problem 4.0.8]{cueto}.

\subsection*{Acknowledgments}
The authors record their sincerest appreciation to the three referees for their valuable comments and advice which have improved the presentation of this paper substantially. 
The first author was supported by JSPS KAKENHI Grant Numbers JP19K03536.  The second author was supported by JSPS KAKENHI Grant Numbers JP21K13804.

\end{document}